\begin{document}
\newtheorem{theorem}{Theorem}
\newtheorem{lemma}[theorem]{Lemma}
\newtheorem{definition}[theorem]{Definition}
\newtheorem{conjecture}[theorem]{Conjecture}
\newtheorem{proposition}[theorem]{Proposition}
\newtheorem{algorithm}[theorem]{Algorithm}
\newtheorem{corollary}[theorem]{Corollary}
\newtheorem{question}{Question}
\newtheorem{observation}[theorem]{Observation}
\newtheorem{problem}[theorem]{Open Problem}
\newcommand{\noin}{\noindent}
\newcommand{\ind}{\indent}
\newcommand{\al}{\alpha}
\newcommand{\om}{\omega}
\newcommand{\pp}{\mathcal P}
\newcommand{\ppp}{\mathfrak P}
\newcommand{\R}{{\mathbb R}}
\newcommand{\N}{{\mathbb N}}
\newcommand{\Z}{{\mathbb Z}}
\newcommand\eps{\varepsilon}
\newcommand{\E}{\mathbb E}
\newcommand{\Prob}{\mathbb{P}}
\newcommand{\pl}{\textrm{C}}
\newcommand{\dang}{\textrm{dang}}
\renewcommand{\labelenumi}{(\roman{enumi})}
\newcommand{\bc}{\bar c}
\newcommand{\A}{\mathcal A}
\newcommand{\B}{\mathcal B}
\newcommand{\K}{\mathcal K}
\newcommand{\sn}{\mathrm{sn} }
\newcommand{\Av}{\mathrm{Av} }
\newcommand{\G}{\mathcal G}
\newcommand{\GHex}{G_{hex}}
\newcommand{\GTri}{G_{\!\bigtriangleup}}
\newcommand{\GSq}{G_{\square}}
\newcommand{\remove}[1]{}

\title{Firefighting on square, hexagonal, and triangular grids}

\author{Tom\'{a}\v{s} Gaven\v{c}iak}
\address{Department of Applied Mathematics, Charles University, Praha, Czech Republic}
\email{\texttt{gavento@kam.mff.cuni.cz}}
\thanks{The first author is supported by the grant SVV-2013-260103.}

\author{Jan Kratochv\'{i}l}
\address{Department of Applied Mathematics, Charles University, Praha, Czech Republic}
\email{\texttt{honza@kam.mff.cuni.cz}}
\thanks{The second author is supported by Czech Research grant CE-ITI GA\v{C}R P202/12/6061.}

\author{Pawe\l{} Pra\l{}at}
\address{Department of Mathematics, Ryerson University, Toronto, ON, Canada}
\email{\texttt{pralat@ryerson.ca}}
\thanks{The third author gratefully acknowledges support from NSERC and Ryerson University}

\keywords{Firefighter, surviving rate, square grid, hexagonal grid}
\subjclass{
05C57  	%Games on graphs
}

\begin{abstract} %%%%%%%%%%%%%%%%%%%%%%%%%%%%%%%%%%%%%%%%%%%%%%%%%%%%%%%%%%%%%%
In this paper, we consider the \emph{firefighter problem} on a graph $G=(V,E)$ that is either finite or infinite. Suppose that a fire breaks out at a given vertex $v \in V$. In each subsequent time unit, a firefighter protects one vertex which is not yet on fire, and then the fire spreads to all unprotected neighbors of the vertices on fire. The objective of the firefighter is to save as many vertices as possible (if $G$ is finite) or to stop the fire from spreading (for an infinite case).

The surviving rate $\rho(G)$ of a finite graph $G$ is defined as the expected percentage of vertices that can be saved when a fire breaks out at a vertex of $G$ that is selected uniformly random. For a finite square grid $P_n \square P_n$, we show that $5/8 + o(1) \le \rho(P_n \square P_n) \le 67243/105300 + o(1)$ (leaving the gap smaller than 0.0136) and conjecture that the surviving rate is asymptotic to 5/8.

We define the surviving rate for infinite graphs and prove it to be $1/4$ for the infinite square grid, even for more than one (but finitely many) initial fires. For the infinite hexagonal grid we provide a winning strategy if two additional vertices can be protected at any point of the process, and we conjecture that the firefighter has no strategy to stop the fire without additional help. We also show how the speed of the spreading fire can  be reduced by a constant multiplicative factor. For triangular grid, we show that two firefighters can slow down the fire in the same sense, which is relevant to the conjecture that two firefighters cannot contain the fire on the triangular grid, and also corrects a previous result of Fogarty~\cite{F}.
\end{abstract}

\maketitle

%\centerline{\tt Draft version March 25, 2014}

\section{Introduction}\label{sec:intro} %%%%%%%%%%%%%%%%%%%%%%%%%%%%%%%%%%%%%%%

The following \emph{firefighter problem} on a graph $G=(V,E)$ was introduced by Hartnell at a conference in 1995~\cite{Hartnel}. Suppose that a fire breaks out at a given vertex $v \in V$. In each subsequent time unit (called a turn), a firefighter \emph{protects} one vertex which is not yet on fire and then the fire spreads to all unprotected neighbors of the vertices already on fire. Once a vertex is on fire or is protected it stays in such state forever. Protecting a vertex is in essence equivalent to deleting it from the graph.

The game stops if no neighbor of the vertices on fire is unprotected and the fire cannot spread. If the graph is finite, the game finishes at some point and the goal of the firefighter is to save as many vertices as possible. In case of an infinite graph, the goal of the firefighter is to stop the fire from spreading or, if this is not possible, to save as many vertices as possible in the limit (we introduce this graph parameter in Section~\ref{sec:inf_graphs}).

Today, almost 20 years later, our knowledge about this problem is much greater and a number of papers have been published. We would like to refer the reader to the survey of Finbow and MacGillivray for more information~\cite{FM}.

\medskip
For finite graphs, we focus on the following property. Let $\sn(G,v)$ denote the number of vertices in $G$ the firefighter can save when the fire breaks out at a vertex $v \in V$, assuming the best strategy is used. Then let $\rho(G,v)=\sn(G,v)/n$ be the proportion of vertices saved (here and throughout the paper, $n$ denotes the number of vertices of $G$, assuming $G$ is finite).
The \emph{surviving rate} $\rho(G)$ of $G$, introduced in~\cite{FHLS}, is defined as the expected $\rho(G,v)$ when the fire breaks out at a random vertex $v$ of $G$ (uniform distribution is used), that is, 
$$
\rho(G) = \frac{1}{n}\sum_{v\in V}\rho(G,v) = \frac{1}{n^2}\sum_{v \in V}\sn(G,v).
$$

For example, it is not difficult to see that for cliques $\rho(K_n) = \frac{1}{n}$, since no matter where the fire breaks out only one vertex can be saved. For paths we get that
$$
\rho(P_n) = \frac {1}{n^2} \sum_{v \in V} \sn(G,v) = \frac {1}{n^2} \left( 2(n-1) + (n-2)(n-2) \right) = 1-\frac {2}{n} + \frac {2}{n^2}
$$
(one can save all but one vertex when the fire breaks out at one of the leaves; otherwise two vertices are burned).

\medskip
It is not surprising that almost all vertices on a path can be saved, and in fact, all trees have this property. Cai, Cheng, Verbin, and Zhou~\cite{CCVZ} proved that  the greedy strategy of Hartnell and Li~\cite{HL} for trees saves at least $1 - \Theta(\log n / n)$ percentage of vertices on average for an $n$-vertex tree. Moreover, they managed to prove that for every outer-planar graph $G$, $\rho(G) \ge 1 - \Theta(\log n / n)$. Both results are asymptotically tight and improved upon earlier results of Cai and Wang~\cite{CW}. (Note that there is no hope for a similar result for planar graphs, since, for example, $\rho(K_{2,n}) = 2/(n+2) = o(1)$.) However, this does not mean that it is easy to find the exact value of $\rho(G)$. It is known that the decision version of the firefighter problem is NP-complete even for trees of maximum degree three~\cite{FKGR}. 

Moving to another interesting direction, the third author of this paper showed that any graph $G$ with average degree strictly smaller than $30/11$ has the surviving rate bounded away from zero~\cite{P} and showed that this result is sharp (the construction uses a mixture of deterministic and random graphs). (See~\cite{P2} for a generalization of this result for the $k$-many firefighter problem.) These results improved earlier observations of Finbow, Wang, and Wang~\cite{FWW}.

\subsection{Our contribution}\label{sec:results} %%%%%%%%%%%%%%%%%%%%%%%%%%%%%%

First, we study the surviving rate of $P_n \square P_n$, the Cartesian product of two paths of length $n-1$. It was announced by Cai and Wang that
$$
0.625 +o(1) = \frac {5}{8} + o(1) \le \rho(P_n \square P_n) \le \frac {37}{48} + o(1) \approx 0.7708
$$
but a formal proof has not been published. We will prove the following result, which provides much better upper bound.

\begin{theorem}\label{thm:finite}
For the Cartesian product of two paths we have
$$
0.625 + o(1) = \frac {5}{8} + o(1) \le \rho(P_n \square P_n) \le \frac {67243}{105300} + o(1) < 0.6386.
$$
\end{theorem}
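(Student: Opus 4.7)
\medskip
\noindent\textbf{Proof plan.} I would prove the lower and upper bounds separately.

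For the \emph{lower bound} $\rho(P_n \square P_n) \ge 5/8 + o(1)$, the plan is to exhibit a firefighter strategy $\sigma_{(i,j)}$ for each possible starting position $(i,j)$ and to average the resulting saved counts. By the $D_4$ symmetry of the square grid I may restrict to the fundamental octant $\{(i,j): 1 \le i \le j \le \lceil n/2 \rceil\}$ and multiply appropriately. For each such $(i,j)$ the strategy protects vertices forming a wall (a line or an L-shape) that confines the fire to a small region; the critical feasibility check is that every wall vertex is protected strictly before the fire reaches it, which restricts how close to the fire the wall can be placed. A key point is that the nearby grid boundaries serve as ``free'' walls for fires near a corner or edge, and this boundary effect is what brings the average up to $5/8$ rather than the $1/4$ that one gets for the infinite grid (cf.\ Section~\ref{sec:inf_graphs}). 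Summing $\sn(G,(i,j))$ over $(i,j)$ and dividing by $n^4$ should yield $5/8 + o(1)$ after a careful but routine calculation.

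For the \emph{upper bound} $\rho(P_n \square P_n) \le 67243/105300 + o(1)$, the plan is to upper bound $\sn(G,v)$ for each starting vertex $v$ individually and then sum. The key observation is that for any firefighter strategy the set $F$ of eventually-protected vertices forms a vertex cut separating $v$ from the saved set $S$, and any valid protection schedule for $F$ must satisfy the necessary condition $|F \cap B_t(v)| \le t$ for every $t \ge 0$, where $B_t(v)$ is the ball of radius $t$ around $v$ in the original grid $G$. Maximizing $|S|$ over all such time-respecting cuts $F$ upper bounds $\sn(G,v)$. For $P_n\square P_n$ I expect the optimum is attained by cuts of a simple combinatorial form (axis-parallel segments, or L-shapes parametrized by a constant number of integers whose ranges depend on $(i,j)$), which reduces the per-vertex problem to a finite optimization. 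Summing the resulting piecewise rational function of $(i,j)$ over the grid produces the claimed rational constant.

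The \emph{main obstacle} will be pinning down the precise constant $67243/105300$. Since $105300 = 324\cdot 325$, I expect this to emerge from an exact summation of quadratic sequences corresponding to the per-vertex upper bound, after optimization over the constrained cut family. The two sub-difficulties are: first, proving that restricting to the chosen family of cut shapes is without loss of generality (i.e., that the upper bound from this family is tight for the time-respecting-cut relaxation); and second, designing the lower-bound strategy so that its saved counts match the upper bound to within the stated gap of less than $0.0136$, which requires combining the corner/edge boundary benefits with the interior single-wall construction in an essentially optimal way.
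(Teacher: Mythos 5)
Your lower-bound plan matches the paper's in spirit: pick one explicit strategy per starting vertex, exploit the $D_4$ symmetry to reduce to an octant, let $(a,b)=(xn,yn)+o(n)$, and integrate the saved fraction over the fundamental region. The paper's specific wall is an expanding V-shape (apex moving west, two arms opening up and down along diagonals) that, once the north border is reached, continues as a single vertical segment to the south border; plugging in gives the saved fraction $(1/2-y)^2 + x + y + o(1)$ and the integral evaluates cleanly to $5/8$. Your ``line or L-shape'' description is vaguer than this but the calculation you envision is essentially the one carried out.

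For the upper bound, however, you miss the paper's central device and propose a route that is both weaker as written and substantially harder to execute. The paper does not reason about cuts at all. Its key tool is Lemma~\ref{lem:dist_r}: \emph{at time $r$, at most $r$ vertices of the sphere $N_r(a,b)$ are non-burning}, for every $r\ge 1$, regardless of strategy. Crucially this bounds non-burning vertices, not merely protected ones, and it is proved by an inductive argument maintaining a system of vertex-disjoint paths from each non-burning vertex of $N_r$ back to some protected vertex. From this the number of burnt vertices is at least $\sum_{r\ge 1}\max(|N_r(a,b)|-r,0)$, which is a direct sum of an explicit piecewise-linear function of $r$. The rest of the paper's proof is an integration over the five combinatorial orderings of the border/fire-front arrival times $t_E,\ldots,t_{SW}$, split into sub-cases by where $|N_r|-r$ first turns negative. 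No optimization over a family of cut shapes ever occurs.

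Your proposed constraint $|F\cap B_t(v)|\le t$ is a weaker statement (it speaks only of protected vertices in a ball, not of all non-burning vertices on each sphere), and even granting it you would still face the obstacle you yourself flag: proving that some tractable family of cut shapes attains the optimum of the time-respecting-cut relaxation. The paper sidesteps this entirely because Lemma~\ref{lem:dist_r} already gives a per-sphere cap on savings. In short, the upper-bound proposal is missing the one lemma that makes the computation go, and replacing it with cut optimization turns a direct summation into an open-ended combinatorial geometry problem; it would not obviously land on the same constant $67243/105300$ (which arises from the particular piecewise integration, not from a numerological factorization of $105300$).
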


Our proof for the upper bound is not very sophisticated and there are ways to improve it. On the other hand, it narrows down the surviving rate to a small interval smaller than 0.0136. It is natural to conjecture the following but this still remains open.
\begin{conjecture}
$\lim_{n \to \infty}  \rho(P_n \square P_n) = 5/8$.
\end{conjecture}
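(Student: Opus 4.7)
The lower bound $\liminf_n \rho(P_n\square P_n)\ge 5/8$ is already established by Theorem~\ref{thm:finite}, so the task is to prove the matching upper bound $\limsup_n \rho(P_n\square P_n)\le 5/8$. The plan is to replace the combinatorial estimates underlying the $67243/105300$ bound with a tighter geometric analysis in a continuum scaling limit, averaging correctly over the location of the initial fire.

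First I would normalize coordinates: for a fire at vertex $v=(i,j)\in\{1,\dots,n\}^2$, set $(x,y)=(i/n,j/n)\in[0,1]^2$ and study the rescaled saving $s_n(x,y):=\sn(P_n\square P_n,v)/n^2$. A short Fekete-type argument, tiling a $(kn)\times(kn)$ grid by $k^2$ copies of $P_n\square P_n$ and noting that block-seam interactions cost only $O(1/n)$ per cell, should give pointwise convergence $s_n\to s$ almost everywhere on $[0,1]^2$. Dominated convergence then yields
\[
\lim_{n\to\infty}\rho(P_n\square P_n)=\int_0^1\int_0^1 s(x,y)\,dx\,dy,
\]
so the conjecture reduces to showing this integral equals $5/8$. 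The limit $s(x,y)$ should be the value of a continuous game in which the fire front evolves as an $\ell_1$-ball around $(x,y)$ at unit speed while the firefighter distributes a unit length of barrier per unit time; the game ends once the barrier, together with the boundary of $[0,1]^2$, encloses the burning region. The lower-bound strategy of Theorem~\ref{thm:finite} provides an explicit candidate for $s$, built from the containment shapes it produces; the hope is that integrating this candidate gives exactly $5/8$.

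The main obstacle is the matching pointwise upper bound on $s(x,y)$: one must show that no adaptive firefighter strategy beats the candidate, for every fixed starting point. This calls for an isoperimetric-type duality pairing the cumulative barrier length built up to time $t$ with the possible containment shapes of the burning region at time $t$, together with a proof that fragmenting or deferring barrier construction cannot help. The present gap of roughly $0.0136$ plausibly comes from applying a single uniform inequality across the whole grid; a refined argument would partition $[0,1]^2$ by regime -- central, near-edge, and near-corner starts -- use the tightest available bound in each, and integrate. Secondary difficulties include making rigorous the scaling limit itself and justifying the passage from discrete integer-valued strategies to the continuous variational problem, for which a Hausdorff-limit framework on the sets of burned and protected vertices would likely be needed.
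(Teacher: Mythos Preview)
This statement is a \emph{conjecture} in the paper, explicitly flagged there as ``still remains open''; the paper contains no proof of it, so there is nothing to compare your attempt against. Your proposal should therefore be read as a research plan for an unresolved problem.

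The plan has two genuine gaps. First, the Fekete-type tiling argument for the existence of $\lim_n s_n(x,y)$ is not convincing as stated. Firefighter is not a local problem: a strategy on $P_{kn}\square P_{kn}$ need not decompose into $k^2$ independent strategies on sub-blocks, and conversely $k^2$ block strategies cannot be glued into one legal global strategy, because you have \emph{one} firefighter per round in total, not one per block. The obstruction is the global resource constraint, not seam interactions, so no $O(1/n)$ boundary correction rescues the argument. Neither the existence of the pointwise limit nor the passage to a continuum barrier-building game is justified by this sketch.

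Second, and more fundamentally, you correctly identify the pointwise upper bound on $s(x,y)$ as ``the main obstacle,'' but the proposed remedy---an unspecified isoperimetric-type duality plus a case split by regime---is exactly what a proof of the conjecture would have to supply. The paper's $67243/105300$ bound already integrates the sharpest available per-radius inequality (Lemma~\ref{lem:dist_r}) over all starting points and already splits $[0,1]^2$ into regimes (see the Appendix); the $0.0136$ gap is not an artifact of crudeness in the averaging but of Lemma~\ref{lem:dist_r} itself becoming slack once the fire reaches the boundary. Closing that slack requires a new structural statement about optimal containment shapes, which your outline gestures at but does not provide.
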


\bigskip
For an infinite graph $G=(V,E)$, the primary goal is to determine if the fire can be stopped from spreading or not. All graphs we discuss here are vertex transitive so the choice of the starting point is irrelevant.

It is known (and easy to show) that it is impossible to surround the fire with one firefighter in the infinite Cartesian grid (see~\cite{WM, F}). On the other hand, it is clear that two firefighters can stop the fire (that is when two vertices can be protected in each round) and in~\cite{WM} the optimal strategy was provided that does it in 8 steps. (See~\cite{32fraction} for a fractional version of this problem.) It was proved in~\cite{F} that if the fire breaks out on the triangular grid, three firefighters contain the fire easily, but the proof that two firefighters
can not contain the fire is unfortunately flawed (as we discuss below) and this question is still open.

\medskip
{\bf For the infinite square grid} $\GSq$, we show that it is optimal to save a $90^\circ$ wedge of vertices. In Section~\ref{sec:inf_graphs} we formally introduce a measure of the surviving rate for infinite graphs and show that $\rho(\GSq)=1/4$.

\medskip
{\bf For the infinite hexagonal grid} $\GHex$, we show that one firefighter can save 2/3 of the grid,
and with just a little additional help of two extra protected vertices, it is possible to stop the fire from spreading:

\begin{theorem}\label{thm:hex}
    $\rho(\GHex)\geq 2/3$.

    Moreover, if the firefighter is allowed to protect one extra vertex at time $t_1$ and one at time $t_2$,
    $1 \leq t_1 \leq t_2$ (possibly with $t_1=t_2$), the firefighter will contain the fire on $\GHex$.
    Moreover, the strategy does not need to know $t_1$ and $t_2$ in advance.
\end{theorem}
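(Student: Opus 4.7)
The plan is to construct an explicit firefighter strategy on $\GHex$ that confines the fire to a subset of asymptotic density $1/3$ in the hex lattice, giving $\rho(\GHex)\geq 2/3$; and then to show that two additional protected vertices received at any times $1 \leq t_1 \leq t_2$ suffice to fully contain the fire.

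\emph{Strategy.} By vertex-transitivity of $\GHex$, fix a source $v_0$ with neighbors $n_1, n_2, n_3$. At turn $1$, protect $n_1$; the fire spreads to $n_2$ and $n_3$. From turn $2$ on, the firefighter alternates between extending two ``walls'' of protected vertices, each a sequence of vertices placed along a distinguished direction of the hex lattice starting near $n_1$, adding one new vertex per turn. The two walls, together with $n_1$, are to form the boundary of a $120^\circ$ wedge containing the initial fire $\{v_0, n_2, n_3\}$. The walls' directions are chosen so that, thanks to the local structure of the honeycomb, the firefighter's rate of one protected vertex per two turns per wall is just sufficient to block every fire path that would otherwise cross that wall.

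\emph{Invariant and density.} The key invariant, proved by induction on the turn, is that the fire cannot escape past either wall. The inductive step is a local case analysis at each wall's current tip, checking that the newly protected vertex stays ahead of any fire path threatening to leak past. Once the invariant is established, the fire is trapped forever in the wedge, which by uniform density of the hex lattice and vertex-transitivity of $\GHex$ has asymptotic density $1/3$. Together with the definition of $\rho$ for infinite graphs from Section~\ref{sec:inf_graphs}, this yields $\rho(\GHex) \geq 2/3$.

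\emph{Two extra protections.} At time $t_1$, use the first extra protected vertex to \emph{cap} one of the walls: place it at or just beyond the wall's current tip so as to close the only remaining escape route for the fire around that tip. After capping, the firefighter need no longer extend that wall, and devotes every subsequent turn to extending the remaining wall at the full (unshared) rate, which pulls the remaining wall strictly ahead of the fire. At time $t_2 \geq t_1$, the second extra protected vertex similarly caps the remaining wall, halting the fire entirely in finitely many more turns. Because the strategy's action at each turn depends only on the current game state (including which extras have been received), no advance knowledge of $t_1$ or $t_2$ is required.

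\emph{Main obstacle.} The main technical step is identifying the correct wall directions in hexagonal coordinates and verifying the invariant—proving that the alternating placement of protected vertices indeed prevents the fire from ever crossing either wall. Once the invariant is established, the density calculation and the capping argument follow easily.
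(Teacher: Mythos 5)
Your lower bound $\rho(\GHex)\geq 2/3$ via two protected rays bounding a $120^\circ$ wedge matches the paper's strategy exactly. But the containment argument using the two extra protections has a genuine gap: your ``capping'' idea does not work. A single extra protected vertex placed at a wall's tip cannot permanently block the fire in an infinite two-dimensional grid. The fire front is an expanding region, and within a couple of turns it routes around any finite, stationary obstruction (cap plus stalled wall). Even after both caps, the two walls form a finite ``V'' shape and the fire is still free to spread past their far ends --- nothing encloses it. Pulling the remaining wall ``strictly ahead of the fire'' does not help either, because a ray, however long, has an end and the fire is not confined to any bounded region.

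What the paper actually does is much more involved. Each extra protected vertex is used to \emph{bend} one of the rays by $60^\circ$: after the first bend the fire is confined to a $60^\circ$ wedge (5/6 of the grid saved), and after the second bend to a \emph{strip} bounded by two parallel protected rays --- crucially, this is still not containment. The firefighter then continues maintaining the strip boundary until it has length roughly $2^7 t$, and only then abandons the rays to build the spiral construction (the same one used for the slowdown theorem) around $N_{\leq t}(v_0)$; the fifth spiral segment meets the strip boundary and finally closes off the fire. The bend-and-spiral mechanism is the essential content of the second half of the theorem, and the proposal contains no substitute for it; ``identifying the correct wall directions'' will not rescue the cap idea, since the failure is topological rather than a matter of choosing the right lattice direction.
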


\medskip
We also show a strategy to slow down the fire by a constant factor in the following sense.
Here and throughout the paper $N_t(v)=\{x \in  V(G):d_G(x,v)=t\}$ denotes the $t$-th neighborhood of $v$ in the graph $G$,
and $N_{\leq t}(v)=\{x \in V(G) :d_G(x,v)\leq t\} = \bigcup_{s \le t} N_s(v)$.

\begin{theorem}\label{thm:hex_slowdown}
    There exists a universal constant $c<1$ such that when only finitely many vertices of $\GHex$ are burning,
    there exist a vertex $v_0\in V$ and a strategy such that for every large enough $T$ all vertices burning after turn
    $T$ are contained in $N_{\leq c\cdot T}(v_0)$.
\end{theorem}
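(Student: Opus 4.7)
The plan is to exhibit an explicit firefighter strategy on $\GHex$ and then verify, by an essentially geometric argument, that the maximum $\GHex$-distance from a suitably chosen center $v_0$ to any burning vertex at time $T$ is at most $cT+O(1)$ for some universal $c<1$. Since only finitely many vertices are initially burning, I can fix a vertex $v^*\in V(\GHex)$ such that the initial fire lies in $N_{\leq R_0}(v^*)$ for some constant $R_0$, and take $v_0:=v^*$; the additive $O(1)$ is absorbed into the $cT$ term by passing to any $c'\in(c,1)$ once $T$ is large enough.

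The strategy I propose is an ``expanding shield.'' Consider the concentric shells $S_s:=N_s(v_0)\setminus N_{\leq s-1}(v_0)$. In $\GHex$ each shell has linear size $|S_s|\sim \alpha s$ for a universal constant $\alpha>1$ coming from the honeycomb's sphere growth. At each turn the firefighter protects one previously unprotected vertex in the outermost shell currently being approached by the fire, chosen so as to block a radially outward neighbor of some burning vertex (with a fixed tie-breaking rule on the angular coordinate around $v_0$). Since the firefighter deposits one protected vertex per turn into a shell of linear size $\alpha s$, by the time the fire has spent $\Theta(s)$ turns in that shell a positive constant fraction of it has been defended; combined with the fact that $\GHex$ is $3$-regular and planar, every detour around a protected vertex costs at least a uniform constant number of extra steps. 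The idea is that this forces the fire's outward advance across successive shells to proceed by a multiplicative constant slower than the uncontested rate of $1$ per turn.

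The final ingredient is a recurrence for $r_t:=\max\{d_{\GHex}(v,v_0):v \text{ is burning at time } t\}$. The geometric analysis above should produce a constant fraction of turns on which $r_{t+1}=r_t$ rather than $r_{t+1}=r_t+1$, yielding $r_T\leq cT+O(1)$ for the desired $c\in(0,1)$, which immediately gives the theorem. The main obstacle, where the real work lies, is the shell-by-shell analysis: a single protected vertex does not on its own block any radial thread, since the fire can reach its far side through one of the two other neighbors. One must therefore track carefully how protected vertices accumulated across past shells interact with the local geometry of $\GHex$ near the current fire frontier, and show that the cumulative effect is a genuine constant-factor slowdown rather than a vanishing one. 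I expect this to come down to a local counting argument on the honeycomb, exploiting that the firefighter's deposits in consecutive shells can be coordinated to create locally ``thick'' barriers — for instance by focusing, during a window of $\Theta(s)$ turns, on a single angular sector of $S_s$ — which then force each burning vertex to pay a constant detour whenever it tries to advance outward through that sector.
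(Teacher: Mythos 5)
Your proposal takes a genuinely different route from the paper, but it has a real gap — one that you yourself flag — and I do not think the ``expanding shield'' strategy as stated can be made to work without being replaced by something structurally different.

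The problem is that scattered single protections in a shell do not slow the fire. If the firefighter protects one vertex per turn in the shell $S_s$ the fire is approaching, by the time the fire reaches distance $s$ (at time $\approx s$) you have placed only $O(s)$ vertices \emph{in total} across all shells, while $|S_s| \sim 3s$, and the fire simply flows through the gaps at the uncontested rate of one shell per turn; a protected vertex in a $3$-regular graph is routed around in $O(1)$ extra steps \emph{locally}, but those local detours do not accumulate into a global constant-factor loss unless the protected vertices are arranged into long connected barriers that the fire must traverse end-to-end. Your fallback suggestion — concentrating $\Theta(s)$ turns on one angular sector to build a thick wall — does not fix this either, because while you are building the wall the fire advances freely in the other five sectors and your radius $r_t$ keeps pace with $t$. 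The recurrence $r_{t+1}=r_t$ ``on a constant fraction of turns'' is exactly what needs to be proved, and nothing in the sketch forces it.

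What the paper does instead is build a single \emph{connected} spiral of protected vertices, one segment per cone of the hexagonal grid, with the firefighter always playing in $N_t(v_0)$ at turn $t$ so the construction stays just ahead of the fire. Successive segments roughly double in length, so at any large turn $T$ the last seven segments have minimum length $t' \gtrsim 2^{-8}T$, and to reach any vertex at graph distance $T$ from $v_0$ the fire must detour around the free end of the spiral, paying at least an extra $t'$ in path length. This gives $r_T \leq T$ only after the fire has actually traveled $\geq (1+2^{-8})T$ steps, i.e.\ $r_T \leq cT$ with $c = 1/(1+2^{-8}) < 1$. The geometric doubling is what makes the detour a \emph{constant fraction} of $T$ rather than a vanishing one; that is precisely the quantitative content your sketch lacks. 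If you want to salvage your shell-based viewpoint, you would essentially have to rediscover that the protections in successive shells must link up into a long connected arc whose length scales with the current radius — at which point you have the spiral.
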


Finally, even though one can contain the fire with just 2 extra protected vertices and can
slow down the fire, we conjecture that one firefighter per turn alone still cannot stop the fire from spreading.

\begin{conjecture}\label{con:hex_not_saved}
    If a fire breaks out on the hexagonal grid, one firefighter does not suffice to contain the fire.
\end{conjecture}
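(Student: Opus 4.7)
The plan is to argue by contradiction: assume some firefighter strategy contains the fire on $\GHex$ at a finite time $T$, and derive an impossible geometric condition. At containment one has $N(B_T) \setminus B_T \subseteq F_T$, where $B_T$ is the set of burned vertices and $F_T$ is the set of protected vertices, so $|N(B_T) \setminus B_T| \leq |F_T| = T$. The goal is to show that this upper bound is incompatible with the dynamics of how the fire must spread turn by turn on a grid of degree~$3$.

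The first step is to reduce the problem to a local per-turn accounting of the active frontier $\partial_t = N(B_t) \setminus (B_t \cup F_t)$. One has $|\partial_0|=3$ and $|\partial_T|=0$, and each turn the firefighter removes at most one vertex from $\partial_t$, while every remaining frontier vertex catches fire, contributing at most two new frontier vertices (each such vertex is of degree~$3$ in $\GHex$ with at least one neighbor already in $B_t$). The second step is to design a weighted potential $\Phi_t$ on $B_t \cup \partial_t$ whose weights are tuned to the three axial-direction pairs of $\GHex$, with the property that $\Phi_{t+1}\geq\Phi_t+c$ for some absolute constant $c>0$ regardless of the firefighter's choice. Combined with an isoperimetric estimate for $\GHex$, the bound $\Phi_T\geq cT$ should then force $|N(B_T)\setminus B_T|>T$, the desired contradiction.

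The hardest step, and the reason the conjecture remains open, is establishing $\Phi_{t+1}\geq\Phi_t+c$ against an adversarial firefighter who may concentrate all effort in a single region and let the fire grow in a long, thin shape. Theorems~\ref{thm:hex} and~\ref{thm:hex_slowdown} make clear that the situation is delicate: two additional protections already suffice for containment and the fire can be slowed by a constant factor, so any successful potential must exploit a genuinely global feature of the frontier, not merely local degree counting. A natural refinement is to maintain three separate potentials (one per pair of opposite axial directions) and prove that at least two of them grow every turn, using the fact that a single protected vertex can block axial advancement in at most one direction; making such an argument robust to all possible fire shapes and firefighter responses is precisely the main obstacle I would expect to encounter.
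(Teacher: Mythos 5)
This statement is a \emph{conjecture} in the paper; the authors explicitly do not prove it, and remark immediately after Theorem~\ref{thm:hex_slowdown} that they believe it but leave it open. So there is no ``paper proof'' to compare your attempt against, and your text itself acknowledges that the decisive step---establishing the monotone increment $\Phi_{t+1}\geq\Phi_t+c$ for some absolute $c>0$ against an adversarial firefighter---is not proved. What you have written is a research plan, not a proof.

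A few concrete comments on the plan. Your reduction ``$N(B_T)\setminus B_T\subseteq F_T$, hence $|N(B_T)\setminus B_T|\leq T$'' is sound and is the standard starting point. However, the frontier accounting that follows only gives an \emph{upper} bound on $|\partial_{t+1}|$ in terms of $|\partial_t|$, which is the wrong direction: to rule out containment you need a quantity that provably cannot be driven to zero. You are right to pivot to a weighted potential; indeed, a plain ``count active vertices at distance $t$'' argument is exactly the kind of reasoning the paper shows to be insufficient (see the discussion of the flawed Theorem~21 of Fogarty~\cite{F} at the end of Section~\ref{sec:intro}, and the slowdown result of Theorem~\ref{thm:hex_slowdown}, which shows that on $\GHex$ the fire can be kept inside $N_{\leq cT}(v_0)$ for a fixed $c<1$, so the frontier at distance $T$ can be empty). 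Any successful $\Phi$ must therefore be genuinely global, as you observe. You have not constructed such a $\Phi$ nor shown the increment, and the ``three axial potentials, at least two grow'' idea would already fail near a long straight protected ray, where the fire is pinned against the wall and only one axial pair is advancing. In short: your framework is reasonable and your self-assessment of where it breaks is accurate, but there is nothing here that closes the gap, which is why the statement remains a conjecture in the paper as well.
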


\medskip
{\bf For the infinite triangular grid} $\GTri$ we show that two firefighters can
slow down the fire to keep $N_{T}(v_0)$ non-burning after $T$ turns, and even slow it down by a constant factor
as in Theorem~\ref{thm:hex_slowdown}:

\begin{theorem}\label{thm:triangle_slowdown}
    There exists a universal constant $c<1$ such that when only finitely many vertices of $\GTri$ are burning,
    there exist a vertex $v_0\in V$ and a strategy for two firefighters such that for every large enough $T$ all vertices burning after turn
    $T$ are contained in $N_{\leq c\cdot T}(v_0)$.
\end{theorem}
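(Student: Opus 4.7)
The plan is to adapt the wall-building approach of Theorem~\ref{thm:hex_slowdown} to the triangular grid, using two firefighters to counteract the fact that each vertex of $\GTri$ has six neighbors. Fix a vertex $v_0 \in V(\GTri)$ sufficiently far from the initial burning set, and a large constant $\tau$ which will be the length of each phase. Let $d_1,\ldots,d_6$ enumerate the six natural directions of $\GTri$, corresponding to the six sides of the hexagonal balls $N_{\leq R}(v_0)$.

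The strategy runs in phases of $\tau$ turns each. In phase $j$, the two firefighters cooperate to build a single straight wall consisting of $2\tau$ consecutive grid vertices along the line-direction perpendicular to $d_{((j-1)\bmod 6)+1}$, thereby cycling through all six directions. The wall is placed at distance $F_j+\tau$ from $v_0$ (measured along $d_{((j-1)\bmod 6)+1}$), where $F_j$ denotes the fire's maximal extent in that direction at the start of the phase. Because the fire advances by at most one vertex per turn, this placement ensures the last vertex of the wall is protected exactly when the fire reaches it.

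A detour argument using the triangular-grid metric (conveniently written in axial coordinates as $d((0,0),(a,b))=\max(|a|,|b|,|a+b|)$) shows that a wall of length $L$ at perpendicular distance $R$ forces any fire-path from $v_0$ to points directly behind the wall to take $L/2 + O(1)$ extra steps around the wall's endpoints. Hence each phase adds a delay of at least $\tau+O(1)$ turns in its walled direction. After $j$ phases, every direction has been walled at least $\lfloor j/6 \rfloor$ times, giving a cumulative delay of at least $\tau \lfloor j/6 \rfloor$ per direction. The fire's extent in any direction at time $j\tau$ is therefore at most $(j - \lfloor j/6 \rfloor)\tau \leq (5/6)\,j\tau + O(\tau)$, and interpolating across phases yields $r(t) \leq (5/6)\,t + O(\tau)$ for all $t$. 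Taking any $c > 5/6$ (e.g.\ $c = 11/12$) gives the desired bound $r(T) \leq cT$ for all sufficiently large $T$.

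The main technical obstacle is confirming that the successive walls remain consistent with the actual fire front: at the start of phase $j$, the true extent $F_j$ must be at most the value predicted by the accumulated-delay argument, so that the wall really is built before the fire overtakes it, and the detour actually costs $\tau + O(1)$ steps rather than something smaller. A routine induction on $j$, combined with the explicit distance formula above to verify that the wall endpoints are far enough from $v_0$ to force a genuine detour, handles both points. One must also check that walls placed at different phases and different radii do not accidentally shorten one another's detours; this follows because the successive walls lie in disjoint angular sectors and at strictly increasing radii from $v_0$.
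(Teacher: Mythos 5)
Your approach is genuinely different from the paper's: the paper builds a single connected spiral whose segments double in length as it winds outward, whereas you propose building isolated straight walls of \emph{fixed} length $2\tau$, cycling through six directions in phases. Unfortunately, the fixed-length-wall scheme contains a real gap, and I do not think it can be patched without essentially reinventing the spiral.

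The problem is the step from ``a wall of length $L$ at radius $R$ forces a detour of $L/2 + O(1)$ for points \emph{directly behind} the wall'' to ``each phase adds a delay of at least $\tau + O(1)$ turns \emph{in its walled direction}.'' These are not the same statement, and the second does not follow. A wall of length $2\tau$ at radius $R$ covers only a short arc of the boundary of $N_{\le R}(v_0)$; once $R \gg \tau$, the wall is a tiny obstacle on a long hexagonal side. The fire reaches radius $R$ at the wall's two endpoints at the same time it would without the wall (the detour cost at the endpoints is $0$), then creeps inward behind the wall. So the maximal distance of the fire from $v_0$ in the walled $60^\circ$ sector is \emph{not} delayed at all; only a narrowing band directly behind the wall's center lags by up to $\tau$, and even that lag does not stack cleanly with the next wall six phases later, because the next wall is centered on a front that has already caught back up at its edges. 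Since you keep $\tau$ a fixed constant while $R \to \infty$, the fraction of each side covered by a wall tends to $0$, and the claimed cumulative bound $r(t) \le (5/6)t + O(\tau)$ does not hold. (That the bound would beat the paper's $c \approx 0.9961$ by a wide margin is itself a warning sign.) There is also a secondary issue you gloss over: with the metric $d(0,(a,b))=\max(|a|,|b|,|a+b|)$, a lattice line ``perpendicular to $d_1$'' such as $\{(R,b)\}$ is not at constant graph distance from $v_0$ (distances range from $R$ to $R+\tau$ over the wall), so ``the last vertex of the wall is protected exactly when the fire reaches it'' needs a more careful order of protection and is not automatic.

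What makes the paper's spiral work is precisely what your construction lacks: the protected set is a single \emph{connected} curve whose free end is always far from the fire, and whose last few segments have length \emph{proportional to the current radius}, because segment lengths double. Thus any path to a vertex at distance $T$ in a given cone must detour around a segment of length $\Omega(T)$, which yields a multiplicative slowdown. With fixed-length isolated walls the detour cost is $O(\tau)=O(1)$ per encounter and the fire simply flows around them, so the overall extent is not slowed by a constant factor. To make your phase-based idea work you would need the wall lengths to grow linearly with the phase index (and be placed so that consecutive walls join up to block the escape route around the previous one), at which point you would have reconstructed the spiral.
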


This contradicts a statement in~\cite{F} in the proof of Theorem 21 stating that two firefighters can not contain the fire on
the triangular grid. The proof has been noted to be flawed by one of the reviewers and perhaps others.
Indeed, in the proof of Theorem 21 in~\cite[p.~34]{F} it is incorrectly stated that $|N^+(A)|\geq|A|+2$
($N^+(A)=N(A)\cap N_{k+1}(v_0)$ in our notation), but that does not hold for every
$A\subseteq B_k$ (burning part of $N_k(v_0)$ in their paper) as required by Theorem~1 of~\cite{F};
for example with $A=\{v\}$ with $v\in B_k$ not on cone boundary we have $|N^+(A)|=2$.

Our result shows that it is not possible to prove that two firefighters are not sufficient argumenting
only with the burning vertices of $N_t(v_0)$ in turn $t$, as there might be none,
but it would still seem possible (or even likely) that two firefighters can not contain the fire on the triangular grid.
As far as we know, this has been conjectured but we are not aware of any written reference.

\section{Finite square grid}\label{sec:square_grid} %%%%%%%%%%%%%%%%%%%%%%%%%%%

A \emph{square grid graph} $P_n \square P_n = (V,E)$ is the graph whose vertices correspond to the points in the plane with integer coordinates from
$$
C = \{ -\lfloor n/2 \rfloor, -\lfloor n/2 \rfloor+1,  \ldots, -1, 0, 1, \ldots,  \lceil n/2 \rceil -1 \}
$$
and two vertices are connected by an edge whenever the corresponding points are at distance 1. In other words,
\begin{eqnarray*}
V &=& \{ (a,b) : a,b \in C \}, \\
E &=& \{ vu : v,u \in V \text{ and } \| v-u \| =1 \}.
\end{eqnarray*}
Vertices $(a, \lceil n/2 \rceil -1), a \in C$ form the \emph{north border}. Similarly, vertices $(a, -\lfloor n/2 \rfloor), a \in C$ form the \emph{south border}, $(-\lfloor n/2 \rfloor, b), b \in C$ form the \emph{west border}, and vertices $(\lceil n/2 \rceil -1, b), b \in C$ form the \emph{east border}.

\medskip
We prove the lower bound and the upper bound stated in Theorem~\ref{thm:finite} in two separate subsections.

\subsection{Lower bound}\label{sec:lower_bound} %%%%%%%%%%%%%%%%%%%%%%%%%%%%%%%

Consider the square grid $P_n \square P_n$ for some integer $n$. Suppose that a fire breaks out at a vertex $(a,b)$. Due to the symmetry, we may assume that $0 \le a \le  \lceil n/2 \rceil -1$ and that $0 \le b \le a$. The firefighter can protect the following sequence of vertices in the first few rounds (see Figure~\ref{fig:defense}~(a)):
$$
(a-1,b), (a-1,b+1), (a-2,b-1),(a-2,b+2), \ldots
$$
Once the north border is reached (note that the last vertex protected is $(a-\lceil n/2 \rceil + 1 + b, \lceil n/2 \rceil -1)$), the firefighter goes straight down to the south border protecting the sequence
$$
(a-\lceil n/2 \rceil +b, 2b-\lceil n/2 \rceil +1), (a-\lceil n/2 \rceil +b, 2b-\lceil n/2 \rceil), \ldots.
$$
(For a `big picture' of this strategy see Figure~\ref{fig:defense}~(b).)

\begin{figure}[htbp]
\begin{center}
\begin{tabular}{ccc}
    \vtop{\null\hbox{\includegraphics{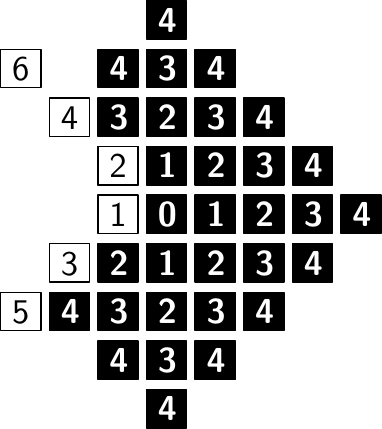}}}
    & \hspace{2cm} &
    \vtop{\null\hbox{\includegraphics{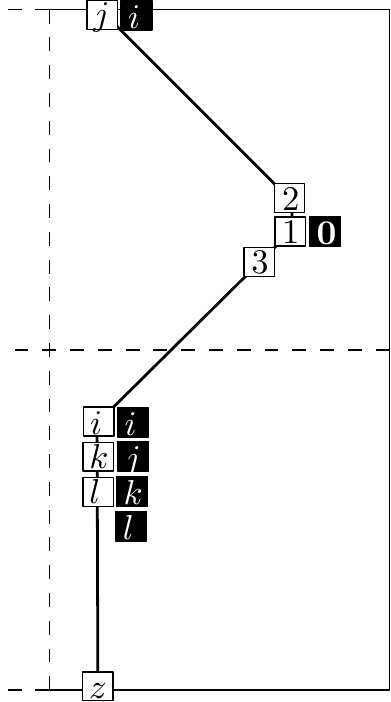}}} \\
    (a) & & (b)
\end{tabular}
\end{center}
    \caption{The beginning of the defense strategy and the `big picture'. The numbers indicate the turn in which the vertex was lit or protected,
    $i$ indicates the turn before firefighters reach north border and $j,k,\dots z$ indicate later turns.} \label{fig:defense}
\end{figure}

Suppose that $a=xn+o(1)$ and $b=yn+o(1)$ for some $0 \le x \le 1/2$ and $0\le y \le x$. Using the described strategy, it is easy to see what percentage of vertices can be saved, which gives us the following bound.
$$
\frac {\sn(P_n \square P_n, (a,b) )}{n^2} \ge \left( \frac 12 -y \right)^2 + \left( \frac 12 +x \right) - \left( \frac 12 -y \right) +o(1) = \left( \frac 12 -y \right)^2 + x + y + o(1).
$$
Hence, since there are 8 symmetric regions to consider,
\begin{eqnarray*}
\rho(P_n \square P_n) &=& \frac {1}{n^2} \sum_{(a,b) \in V} \frac {\sn(P_n \square P_n, (a,b) )}{n^2} \\
& \ge & 8 \int_{0}^{1/2} \int_{0}^{x} \left( \left( \frac 12 -y \right)^2 + x + y \right) dy dx + o(1) \\
& = & 8 \int_{0}^{1/2} \left( - \frac 13 \left( \frac 12 -x \right)^3 + \frac 32 x^2 + \frac {1}{24} \right) dx + o(1) \\
&=& \frac {5}{8} + o(1).
\end{eqnarray*}
The lower bound holds.

\subsection{Upper bound}\label{sec:upper_bound} %%%%%%%%%%%%%%%%%%%%%%%%%%%%%%%

Consider the square grid $P_n \square P_n$ for some integer $n$. For a given vertex $(a,b)$ and $r \in \N$, we abuse slightly the notation and use $N_r(a,b)$ instead of $N_r((a,b))$, the set of vertices at distance $r$ from $(a,b)$. We will use $N^{NE}_r(a,b)$ to denote vertices of $N_r(a,b)$ of the form $(a+s, b+r-s), s = 0, 1, \ldots, r$ and call such vertices \emph{North-East fire-front}. Fire-fronts to other directions and sets $N^{NW}_r(a,b)$, $N^{SE}_r(a,b)$, $N^{SW}_r(a,b)$ are defined analogously. Finally, note that the intersection of any two fire-fronts may be non-empty (for example, $N^{NE}_r(a,b) \cap N^{NW}_r(a,b) = \{ (a,b+r) \}$, provided that $b+r \in C$). We will call such vertices \emph{corners}.

We start with the following simple but very powerful observation.

\begin{lemma}\label{lem:dist_r}
Suppose that a fire breaks out at a vertex $(a,b)$ of $P_n \square P_n$. Regardless of the strategy used by the firefighter, there are at most $r$ vertices in $N_r(a,b)$ that are not burning at time $r$, for every $r \ge 1$.
\end{lemma}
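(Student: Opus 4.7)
My plan is to prove the bound by induction on $r$, writing $U_r := N_r(a,b)\setminus F_r$ for the non-burning vertices in the $r$-th sphere at time $r$, and $P_r$ for the set of at most $r$ vertices protected by time $r$. The base case $r=1$ is immediate, as only the one vertex the firefighter chose at turn $1$ can belong to $U_1$, so $|U_1|\le 1$.

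For the inductive step I will assume $|U_{r-1}|\le r-1$ and use the following dichotomy: if $u\in U_r$ then either $u\in P_r$, or else $u$ is unprotected and every neighbor of $u$ in $N_{r-1}(a,b)$ lies in $U_{r-1}$---otherwise such a burning neighbor at time $r-1$ would ignite the unprotected $u$ at turn $r$. My goal will be to construct an injection $\phi:U_r\hookrightarrow P_r$, which immediately yields $|U_r|\le |P_r|\le r$. For $u\in U_r\cap P_r$ I set $\phi(u)=u$. For a naturally saved $u\in U_r\setminus P_r$ I will fix a canonical shortest path from $(a,b)$ to $u$---say, breaking ties lexicographically on the choice of coordinate step---and take $\phi(u)$ to be the first vertex on this path that was protected no later than its own distance from $(a,b)$. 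Iterating the dichotomy down the path guarantees such a vertex exists: the path starts at the burning vertex $(a,b)$ and ends at the non-burning $u$, so it must cross from burning to non-burning at some step, and the transition vertex is necessarily a ``timely-protected'' one.

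The main obstacle will be to verify the injectivity of $\phi$. The tool I have available is the especially clean parent--child structure inside the grid: viewing $N_r(a,b)$ as a diamond cycle of length $4r$, each of its non-corner vertices has exactly two parents in $N_{r-1}(a,b)$, both lying on the same ``side'' (NE, NW, SE, or SW), and each of the four axis-corners of $N_r(a,b)$ has exactly one parent, which is the corresponding axis-corner of $N_{r-1}(a,b)$. Against this structure, the canonical paths associated with distinct endpoints $u_1, u_2\in U_r$ diverge at some definite step, and the tie-breaking rule can be arranged so that this divergence separates the first timely-protected ancestors of $u_1$ and of $u_2$. The truly delicate point is the bookkeeping near the four diamond corners, where each side could naively contribute a ``$+1$'' slack; but the inductive hypothesis $|U_{r-1}|\le r-1$ forbids $U_{r-1}$ from simultaneously saturating all four sides of $N_{r-1}(a,b)$, and this constraint absorbs the slack in every case. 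Once the injectivity of $\phi$ is established, the induction immediately closes with $|U_r|\le r$.
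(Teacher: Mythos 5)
Your overall plan—build an injection from the non-burning vertices of $N_r(a,b)$ into the set of at most $r$ protected vertices—is exactly the spirit of the paper's proof: the paper establishes, by induction on $r$, a system of \emph{vertex-disjoint} paths from each non-burning vertex of $N_r(a,b)$ to a protected vertex, which is precisely such an injection. Your existence argument for $\phi(u)$ is also correct: along any shortest path from $(a,b)$ to a naturally saved $u\in U_r$, the indicator ``$v_i$ burning at time $i$'' goes from $1$ at $i=0$ to $0$ at $i=r$, and at the first downward transition the vertex $v_{i+1}$ must have been protected by turn $i+1$, hence is timely-protected.

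The gap is the injectivity of $\phi$, and it is a real one rather than mere bookkeeping. With a \emph{fixed} canonical shortest path (any static tie-breaking rule, chosen before the firefighter's moves are known), two distinct naturally saved vertices can share the prefix of their canonical paths up through their common first timely-protected ancestor, giving $\phi(u_1)=\phi(u_2)$. For a concrete instance: start the fire at $(0,0)$ and let the firefighter protect $(1,0)$ at turn $1$, $(0,2)$ at turn $2$, $(2,1)$ at turn $3$, $(1,3)$ at turn $4$. Then $(4,0)$ and $(3,1)$ both lie in $U_4$, and under ``$+x$-steps first'' tie-breaking both canonical paths run through $(1,0),(2,0),(3,0)$, whose first timely-protected vertex is $(1,0)$; so $\phi((4,0))=\phi((3,1))=(1,0)$. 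Switching to ``$+y$-steps first'' repairs this example but breaks a mirror-image one in which the firefighter builds on the other side of the north axis (protect $(0,1)$, $(1,2)$, $(-1,2)$, $(1,3)$; then $(0,4)$ and $(-1,3)$ collide at $(0,1)$). Because the protection strategy is adversarial, no single lexicographic rule can be fixed in advance that survives all strategies, and your appeal to ``the inductive hypothesis absorbs the slack near the corners'' does not interact with the static path choice in any way that would prevent such prefix collisions. What the paper does differently, and what is essential, is to build the paths \emph{adaptively during the induction}: at each level $r\to r+1$, each new non-burning vertex inherits and extends the path of one of its (necessarily non-burning) parents, choosing extension directions group by group around the diamond so as to keep the system disjoint, and the corner/saturation counting argument you gesture at is used there to rule out the one configuration where no consistent assignment exists. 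That adaptive construction is the missing ingredient; the a priori canonical path cannot play its role.
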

\begin{proof}
In order to prove the theorem, we prove the following stronger claim:
\emph{At time $r \ge 1$, for every non-burning vertex $v$ of $N_r(a,b)$ there is a path $P_v$ from $v$ to some protected vertex $p(v)$. It is allowed that a path is trivial (that is, $v=p(v)$) when $v$ is itself protected. Moreover, all the paths $P_v$ are vertex disjoint.}

\begin{figure}[htbp]
\begin{center}
      \includegraphics{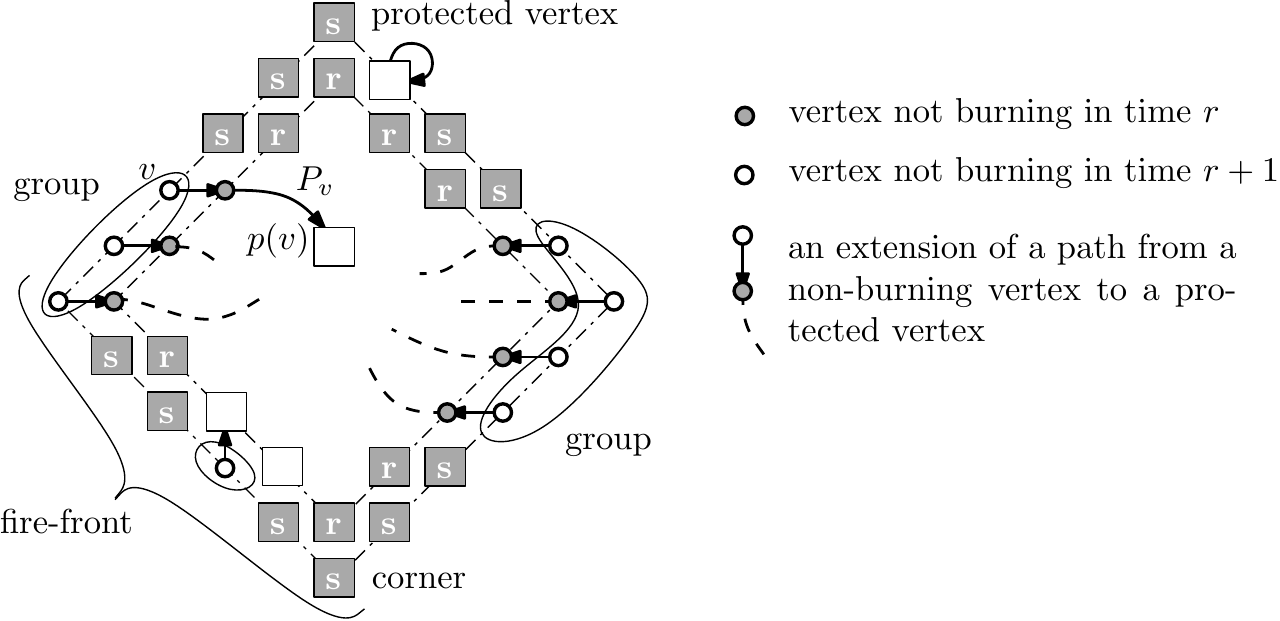}
\end{center}
    \caption{Extending paths in time $r$ to time $s=r+1$.} \label{fig:k-th-move}
\end{figure}

We prove the claim by induction. Clearly, the property holds for $r = 1$: if a neighbor of $(a,b)$ is not burning at time $r=1$, then it must be protected. Suppose that the property holds for $r \ge 1$, our goal is to show that it holds for $r+1$. Let $v$ be a vertex of $N_{r+1}(a,b)$ that is not burning at time $r+1$. If $v$ is protected, then it yields a trivial path. Suppose then that $v$ is not protected. It is clear that no neighbor of $v$ was burning in time $r$; otherwise, $v$ would be on fire in time $r+1$ too.
Hence, each neighbor of $v$ in $N_r(a,b)$ is associated with a unique path; $v$ has two such neighbors, unless $v$ is a corner vertex in which case there is only one such neighbor.
It follows that paths can be extended to all non-burning vertices of $N_{r+1}(a,b)$ by choosing one extension direction for every
\emph{group} of non-burning vertices (see Figure~\ref{fig:k-th-move}). This is always possible \emph{unless} the whole fire-front (one of NE, NW, SE, or SW) is a non-burning group that does not touch the boarder (that is, both corners and their neigbours are in C). Fortunately, this situation cannot occur since this would imply that there were $r+1$ vertices of $N_r(a,b)$ not burning at time $r$ and therefore $r+1$ paths to $r+1$ protected vertices, which is impossible. (Note that $|N_r(a,b)|=4r$ but each fire-front consists of $r+1$ vertices, including two corner vertices.) Therefore the claim holds for $r+1$ and the proof is finished.
\end{proof}

Before we move to investigating the surviving rate of $P_n \square P_n$ let us focus on the case $(a,b)=(0,0)$ in order to explain the idea in a simple setting. Consider first the graph $G$ induced by the set of vertices at distance at most $\lceil n/2 \rceil - 1$ from $(0,0)$ (that is, `diamond shape' square grid). It follows from Lemma~\ref{lem:dist_r} that for every $1 \le r \le \lceil n/2 \rceil - 1$, the fraction of vertices from $N_r(0,0)$ that are saved is at most 1/4 (since $|N_r(0,0)|=4r$ for $1 \le r \le \lceil n/2 \rceil - 1$). On the other hand, the strategy for the firefighter provided in the previous subsection guarantees that this can be achieved. Hence, $\sn(G, (0,0))/|V(G)| = 1/4 + o(1).$

For the original grid $P_n \square P_n$ the situation is slightly more complicated, even for the starting point $(a,b)=(0,0)$. We need to investigate the number of vertices at distance $r$ from $(0,0)$ which changes once we reach the boarder. We have
$$
|N_r(0,0)| =
\begin{cases}
4r & \text{ if } 1 \le r  \le \lceil n/2 \rceil - 1 \\
4(n-r) + O(1) & \text{ if } \lceil n/2 \rceil - 1 < r  \le 2 \lfloor n/2 \rfloor.
\end{cases}
$$
It follows from Lemma~\ref{lem:dist_r} that the number of vertices burnt is at least
\begin{eqnarray*}
\sum_{r=1}^{2 \lfloor n/2 \rfloor} \max(|N_r(0,0)| - r,0) &\ge& n^2 \left( \int_{0}^{1/2} 3x dx + \int_{1/2}^{4/5} (4-5x) dx + o(1) \right) \\
&=& n^2 \left( \frac {3}{5} + o(1) \right).
\end{eqnarray*}
We get that the fraction of vertices saved is at most $2/5 + o(1)$. Clearly, this bound can be improved. In order to play optimally and save $r$ vertices at distance $r$ during the first phase ($r \le n/2$) the firefighter has to follow the strategy described in the previous subsection. But if this is the case, the strategy is not optimal in the second phase ($r > n/2$) and there is no way to keep saving $r$ vertices at distance $r$. As we already mentioned, we conjecture that the strategy yielding the lower bound is optimal, giving the following conjecture for the case $(a,b)=(0,0)$.

\begin{conjecture}
$$
\lim_{n \to \infty} \frac {\sn(P_n \square P_n, (0,0) )}{n^2} = \frac {1}{4}.
$$
\end{conjecture}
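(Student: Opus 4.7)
The lower bound matches the general bound of Section~\ref{sec:lower_bound} at $x=y=0$. For the matching upper bound, the naive use of Lemma~\ref{lem:dist_r} gives only $\le 2/5+o(1)$, since on Phase~2 shells ($r>k:=\lfloor n/2\rfloor$) the bound $|S_r|\le r$ is loose against the actual shell size $|N_r\cap V|=4(n-r)$. The goal is therefore to prove the Phase~2 refinement
\begin{equation*}
|S_r|\;\le\;(n-r)(1+o(1))\qquad\text{for every } k<r\le 2k,
\end{equation*}
since combining this with Lemma~\ref{lem:dist_r} for $r\le k$ sums to $\sn(P_n\square P_n,(0,0))\le\sum_{r\le k}r+\sum_{r>k}(n-r)+o(n^2)\sim n^2/4$.

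The first step is to apply the vertex-disjoint-paths construction of Lemma~\ref{lem:dist_r} locally inside each of the four closed quadrant-cones $A_1,\dots,A_4$. Since the induction step extends a path from $v=(a,b)\in A_i\cap N_{r+1}$ only via $(a\pm 1,b)$ or $(a,b\mp 1)$ toward the origin (the other two options leave $N_{r}$), a path starting inside $A_i$ stays inside $A_i$; the two bounding axes are shared between neighbouring cones but contribute only $O(n)=o(n^2)$ vertices that can be absorbed into the error term. This yields the per-cone estimate $|S_r\cap A_i|\le a_i^{(r)}$, where $a_i^{(r)}$ counts protections placed in $A_i$ by time $r$, with $\sum_i a_i^{(r)}=r+O(n)$.

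The decisive remaining step is to show that at most one of the $a_i^{(r)}$ can actually grow at rate close to $r$ on every shell: the firefighter can sustain a ``wedge'' defence in only one cone. A wedge has two arms, each of which must advance its tip by one vertex per turn to stay level with the fire-front, and the firefighter has only one protection per turn, so the two arms of a single wedge already absorb the firefighter's entire budget via alternation, as in Section~\ref{sec:lower_bound}. If the firefighter split protections between two cones, each of the four arms would receive only one protection per $4$ turns and the fire would race ahead along any under-supplied arm within $O(1)$ steps, disconnecting that arm and wiping out all further saves in the associated cone. Quantifying this as a \emph{sustainability inequality} of the form: every adaptive strategy whose Phase~1 saves deviate from $|S_r|=r$ by a total of $\delta$ protections either already loses $\Omega(\delta)$ in Phase~1 or loses $\Omega(\delta)$ of the remaining Phase~2 potential, would then combine with the per-cone bound to give $|S_r|\le (n-r)+o(n)$ throughout Phase~2.

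The main obstacle is precisely this sustainability step, made uniform over \emph{every} adaptive firefighter. A clean single-shell inequality is elusive because the firefighter can at any time abandon one arm and concentrate protections elsewhere; ruling out that some adaptive re-allocation produces, by a fortunate geometric trick, more than $n-r$ saved vertices on a Phase~2 shell — and doing so without first fixing the strategy — is what separates the easy heuristic from a rigorous proof, and is, I believe, why the statement is left as a conjecture rather than a theorem in the paper.
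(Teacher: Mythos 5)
The statement you were asked to prove is presented in the paper as an \emph{open conjecture}, and the paper contains no proof of it. The only rigorous facts the paper establishes in its direction are the ones you already cite: the wedge strategy of Section~\ref{sec:lower_bound} gives the matching lower bound $\ge 1/4 + o(1)$, and a direct application of Lemma~\ref{lem:dist_r} to all shells gives only the upper bound $\le 2/5 + o(1)$. The paper then remarks informally that the Phase-1-optimal strategy (saving exactly $r$ vertices on shell $r$ for $r \le n/2$) is no longer optimal in Phase 2, which is exactly the tension your write-up identifies, and leaves the matter there.

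Your analysis is therefore consistent with the paper's discussion and correctly diagnoses where the difficulty lies: the target refinement $|S_r| \le (n-r)(1+o(1))$ for $r > n/2$ is what would be needed, and the obstacle is a quantitative "sustainability" inequality that holds uniformly over all adaptive firefighter strategies. Your per-cone path-counting framing is plausible, but as you note it would require machinery beyond Lemma~\ref{lem:dist_r}: that lemma charges saves on a single shell against the global protection budget and provides no way to charge a protection made early against saves it fails to produce on later shells, nor to rule out a firefighter who pre-builds a barrier several shells ahead rather than alternating tip-to-tip as your "racing arms" picture assumes. So you have not proved the statement, but you have correctly recognized that it is unproved and correctly located the missing step, which matches the paper's own assessment that this remains open.
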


\medskip
The proof for the general case is rather technical and we present it with all details in Appendix~\ref{sec:appenix_upper}.

\section{Infinite graphs}\label{sec:inf_graphs} %%%%%%%%%%%%%%%%%%%%%%%%%%%%%%%

In this section we introduce the concept of surviving rate for infinite graphs with all finite degrees, and present a few results for infinite square, hexagonal and triangular grid. Assuming a fixed and deterministic firefighter strategy, a vertex $v$ is considered \emph{saved} if the strategy guarantees that $v$ never catches fire ($v$ does not have to get protected during the process). This is well defined even for infinite graphs---given a fixed and deterministic strategy, the game is pre-determined and either there is a point of the process when $v$ catches fire or it is never on fire (that is, $v$ is saved). The surviving rate of a strategy $\mathcal{F}$ (used in the process in which the fire breaks out at vertex $v$) is then defined as $\rho_{\mathcal{F}}(G,v)=\liminf_{i\to\infty}\frac{|N_{\leq i}(v)\cap S|}{|N_{\leq i}(v)|}$ where $S=S(\mathcal{F})$ is the set of saved vertices and $N_{\leq i}(v)$ is the set of vertices at distance at most $i$ from $v$. As we assume that all degrees are finite, $|N_{\leq i}(v)|$ is always finite. Let the optimal surviving rate be $\rho(G,v)=\sup_{\mathcal{F}}\rho_{\mathcal{F}}(G,v)$. We always distinguish whether this ratio can be attained or not. Note that this coincides with the definition of $\rho(G,v)$ for finite graphs.

For example, for an infinite binary tree $T_2$ rooted at $r$, we have $\rho(T_2, r)=1$ as we can save all the vertices but a single infinite ray (path) from $r$. This follows from the fact that for trees, it is always optimal to protect a neighbor of a burning vertex rather than a vertex further away. Note that it is not possible to save all but finite number of vertices of $T_2$. Similarly, for an infinite ternary tree $T_3$ rooted at $r$ we have $\rho(T_3, r)=1/2$. We omit the proof of this statement and leave it as an exercise.

The expected surviving rate is not extensible to infinite graphs without explicitly stating the distribution (there is no uniform distribution on an infinite number of vertices). However, for vertex-transitive graphs, we have $\rho(G,v_1)=\rho(G,v_2)$ for any two vertices $v_1,v_2$, and we denote it as $\rho(G)$. Note that both square grid and hexagonal grid are vertex-transitive.

\medskip
To see the robustness of our definition, let us generalize the definition by allowing a different center of measurement:
$\rho_{\mathcal{F}}(G,v,c)=\liminf_{i\to\infty}\frac{|N_{\leq i}(c)\cap S|}{|N_{\leq i}(c)|}$ where $S$ are the vertices saved by $\mathcal{F}$, provided that the first breaks out at vertex $v$. Generally, the ratio depends on the choice of $c$
(as can be seen in $T_3$ and other fast-expanding graphs; in $T_3$ we can place $c$ to the root), but when $|N_i(c)|$ grows asymptotically strictly slower than $|N_{\leq i}(c)|$, we get the following result:

\begin{lemma}\label{lem:inf_center}
    Given an infinite connected graph $G$ with finite degrees, let $C_c(i)=|N_i(c)|$ and $A_c(i)=|N_{\leq i}(c)|$.
    If we have $C_c(i)=o(A_c(i))$\footnote{The standard notation $f(i)=o(g(i))$ denotes $f(i)/g(i) \to 0$ as $i \to \infty$} for some $c\in V(G)$, we have
    $\rho_{\mathcal{F}}(G,v,c)=\rho_{\mathcal{F}}(G,v,c')$ for any $c'\in V(G)$.
\end{lemma}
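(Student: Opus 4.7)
The plan is to exploit a sandwich based on the nested inclusions $N_{\leq i-d}(c)\subseteq N_{\leq i}(c')\subseteq N_{\leq i+d}(c)$, where $d=d_G(c,c')$ is a fixed constant; these follow immediately from the triangle inequality. Intersecting with the (deterministic) set $S=S(\mathcal{F})$ of saved vertices and taking cardinalities yields
$$
|N_{\leq i-d}(c)\cap S|\leq|N_{\leq i}(c')\cap S|\leq|N_{\leq i+d}(c)\cap S|, \qquad A_c(i-d)\leq A_{c'}(i)\leq A_c(i+d),
$$
so both the numerator and the denominator of the ratio defining $\rho_{\mathcal{F}}(G,v,c')$ are sandwiched by the corresponding quantities centered at $c$, shifted by $\pm d$.

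The crux of the argument is to promote the hypothesis $C_c(i)=o(A_c(i))$ to slow variation of the denominator, in the sense that $A_c(i\pm d)/A_c(i)\to 1$ as $i\to\infty$. This is immediate from $A_c(i)=A_c(i-1)+C_c(i)$, which gives $A_c(i-1)/A_c(i)=1-C_c(i)/A_c(i)\to 1$ and hence also $A_c(i)/A_c(i-1)\to 1$; iterating the fixed number $d$ of such steps and taking a product yields the claim. Writing $b(i)=|N_{\leq i}(c)\cap S|$ and combining with the sandwich,
$$
\rho_{\mathcal{F}}(G,v,c') \leq \liminf_{i\to\infty}\frac{b(i+d)}{A_c(i-d)} = \liminf_{i\to\infty}\left(\frac{b(i+d)}{A_c(i+d)}\cdot\frac{A_c(i+d)}{A_c(i-d)}\right) = \rho_{\mathcal{F}}(G,v,c),
$$
where the second factor tends to $1$ by the slow-growth step and a constant index shift does not change the liminf of the first. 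The reverse inequality is symmetric, bounding the numerator of $\rho_{\mathcal{F}}(G,v,c')$ below by $b(i-d)$ and the denominator above by $A_c(i+d)$.

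The main obstacle I expect is the slow-growth deduction: one has to be careful that $C_c(i)=o(A_c(i))$ genuinely propagates to $A_c(i+d)/A_c(i)=1+o(1)$, rather than merely a bounded $O(1)$ factor. The iteration over $d$ shifts is harmless precisely because $d$ is a fixed constant depending only on $c$ and $c'$; if $d$ were allowed to grow with $i$ the argument would collapse, which is consistent with the paper's observation that the lemma fails for fast-expanding graphs such as $T_3$, where $|N_i(c)|$ is comparable to $|N_{\leq i}(c)|$.
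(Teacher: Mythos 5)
Your proof is correct and follows the same overall structure as the paper's --- both start from the inclusion sandwich $N_{\leq i-d}(c)\subseteq N_{\leq i}(c')\subseteq N_{\leq i+d}(c)$ (with $d=d_G(c,c')$) and then control the growth of $A_c$. The difference lies in that growth-control step: the paper establishes only the weaker bound that there is a constant $q>0$ with $A_c(j+1)\leq q\,A_c(j)$ for all $j$, arguing by contradiction (if no such $q$ existed, then $A_c(j)\geq 2A_c(j-1)$ infinitely often, forcing $C_c(j)\geq \frac{1}{2}A_c(j)$ infinitely often and contradicting $C_c=o(A_c)$); this is then used to convert the additive error $o(A_c(i+d))$ into $o(A_c(i))$ for the numerator and denominator separately before dividing. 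You instead read off directly from $A_c(j)=A_c(j-1)+C_c(j)$ that $A_c(j-1)/A_c(j)=1-C_c(j)/A_c(j)\to 1$, giving the stronger multiplicative fact $A_c(i+d)/A_c(i-d)\to 1$, which lets you sandwich the whole ratio at once (using that the first factor $b(i+d)/A_c(i+d)$ is bounded in $[0,1]$, so multiplying by a $1+o(1)$ factor leaves the liminf unchanged). Your variant is a touch cleaner and more elementary; both arguments are sound, and your closing remark about why the conclusion should fail without $C_c=o(A_c)$ (e.g.\ on $T_3$) is the right diagnosis.
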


Note that this is the case for square, hexagonal, triangular and many other grid-like graphs.

\begin{proof}
    Assume fixed $c$, $c'$, $\mathcal{F}$ and $S$ and let $d=d(c',c)$.
    Then for any $i$ we have
\begin{eqnarray*}
    |N_{\leq i}(c')\cap S| &\geq& |N_{\leq i-d}(c)\cap S| \\
    &=& |N_{\leq i}(c)\cap S| - \sum_{j=i-d+1}^{i}|N_{j}(c)\cap S|  \\
    &=& |N_{\leq i}(c)\cap S| - o(|N_{\leq i}(c)|),
\end{eqnarray*}
    since $|N_{\leq i}(c)|$ is non-decreasing.
    Similarly, we get $|N_{\leq i}(c')\cap S| \leq |N_{\leq i}(c)\cap S| + o(|N_{\leq i+d}(c)|)$.
    By omitting the intersection with $S$ above, we get $|N_{\leq i}(c')| \geq |N_{\leq i}(c)| - o(|N_{\leq i}(c)|)$ and
    $|N_{\leq i}(c')| \leq |N_{\leq i}(c)| + o(|N_{\leq i+d}(c)|)$.

    Now we will show that there exists $q>0$ such that $A_c(j+1)\leq q A_c(j)$ for all $j$. For a contradiction, suppose that it is not the case. Then,
    the value of $A_c(j)$ compared to $A_c(j-1)$ at least doubles at infinitely many $j$'s.
    At these points, $C_c(j) \geq (1/2) A_c(j)$, contradicting $C_c(j)=o(A_c(j))$. Therefore we have
    $|N_{\leq i+d}(c)|\leq q^d |N_{\leq i}(c)|$ and we can replace $o(|N_{\leq i+d}(c)|)$ with
    $o(|N_{\leq i}(c)|)$ in the above expressions.

    Applying to the terms of the limit in $\rho_{\mathcal{F}}(G,v,c')$ we get
    $$
    \frac{|N_{\leq i}(c')\cap S|}{|N_{\leq i}(c')|} \leq
    \frac{|N_{\leq i}(c)\cap S|+o(|N_{\leq i}(c)|)} {|N_{\leq i}(c)|-o(|N_{\leq i}(c)|)} \leq
    \frac{|N_{\leq i}(c)\cap S|}{|N_{\leq i}(c)|} + o(1).
    $$
    Proving the other direction is analogous, and so we get $\rho_{\mathcal{F}}(G,v,c')=\rho_{\mathcal{F}}(G,v,c)$.
\end{proof}

\subsection{Infinite square grid}\label{sec:inf_square_grid} %%%%%%%%%%%%%%%%%%

For the infinite square grid we show that the surviving rate is equal to $1/4$.

\begin{theorem}\label{thm:inf_sq_grid}
    For the infinite square grid $\GSq$ we have $\rho(\GSq)=1/4$.
\end{theorem}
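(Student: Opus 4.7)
The plan is to prove the two inequalities $\rho(\GSq) \leq 1/4$ and $\rho(\GSq) \geq 1/4$ separately, the former by a distance-neighborhood counting argument using Lemma~\ref{lem:dist_r} and the latter by exhibiting the ``V-wall'' strategy from Section~\ref{sec:lower_bound}. For the upper bound, I would first check that Lemma~\ref{lem:dist_r} transfers verbatim to the infinite square grid: its proof only uses that each of the four fire-fronts NE, NW, SE, SW of $N_r(v)$ has exactly $r+1$ vertices, that $|N_r(v)|=4r$, and the contradiction that a wholly non-burning fire-front would force $r+1$ pairwise vertex-disjoint paths ending in only $r$ protected vertices; all three ingredients hold in $\GSq$, and the ``does not touch the border'' caveat of the finite proof is never invoked. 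Given the lemma, for any strategy $\mathcal{F}$ with saved set $S$ we get $|S\cap N_r(v)|\leq r$ for each $r\geq 1$, and so
\begin{equation*}
    \frac{|S\cap N_{\leq R}(v)|}{|N_{\leq R}(v)|} \leq \frac{R(R+1)/2}{2R^2+2R+1} \longrightarrow \frac{1}{4},
\end{equation*}
which gives $\rho_{\mathcal{F}}(\GSq,v)\leq 1/4$ for every $\mathcal{F}$.

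For the lower bound I would apply the strategy of Section~\ref{sec:lower_bound} with the fire at the origin: at turn $2k-1$ protect $(-k,-(k-1))$ and at turn $2k$ protect $(-k,k)$, i.e., play the sequence $(-1,0),(-1,1),(-2,-1),(-2,2),(-3,-2),(-3,3),\ldots$. The protected vertices lie on two diagonal half-lines, a southern arm on $y=x+1$ and a northern arm on $y=-x$ for $x\leq -1$, meeting near the origin in a $90^\circ$ wedge opening to the west. The key claim, proved by induction on $t$, is that at every time $t$ no vertex of the open wedge $W=\{(x,y):x\leq -1,\ x+2\leq y\leq -x-1\}$ has caught fire: assuming the walls have held up to time $t-1$, the only burnable neighbors of $(-k,-(k-1))$ are its east neighbor $(-k+1,-(k-1))$ and its south neighbor $(-k,-k)$ (the other two lie in $W$ or on the wall and are safe by induction), and since the east neighbor is at distance $2k-2$ from the origin, the fire reaches $(-k,-(k-1))$ no earlier than turn $2k-1$, exactly when it is protected; the argument for $(-k,k)$ is symmetric. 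A straightforward column-by-column count then gives $|W\cap N_{\leq R}(v)|\sim R^2/2$, which combined with $|N_{\leq R}(v)|\sim 2R^2$ yields $\rho_{\mathcal{F}}(\GSq,v)\geq 1/4$.

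The main obstacle is precisely this ``just-in-time'' containment check: unlike in the finite setting of Section~\ref{sec:lower_bound} there is no grid boundary to naturally close off the V-arms at infinity, so one must verify by hand that the schedule has no slack and no accidental ``around-the-tip'' breach through which the fire could sneak into $W$. Once this timing induction is in place, the counting on both sides is entirely routine and combining the two bounds gives $\rho(\GSq)=1/4$.
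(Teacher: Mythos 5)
Your proposal is correct and follows essentially the same route as the paper: the upper bound via Lemma~\ref{lem:dist_r} (at most $r$ saved vertices at distance $r$, so the fraction tends to $1/4$), and the lower bound by applying the wedge strategy of Section~\ref{sec:lower_bound} to the infinite grid. Your write-up is considerably more detailed than the paper's two-sentence argument, but the underlying ideas match exactly.
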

\begin{proof}
Let $v$ be the vertex that catches fire initially. Then, after the $i$-th turn, at least $(3/4) |N_i(v)|$ vertices of $N_i(v)$ burn by Lemma~\ref{lem:dist_r}. So we have $\rho(\GSq)\leq 1/4$ by the definition of $\rho(G,v)$.

    On the other hand, the strategy outlined in Section~\ref{sec:lower_bound}, applied to the infinite grid
    saves vertices in $90^\circ$ wedge, giving $\rho(\GSq)\geq 1/4$.
\end{proof}

\subsection{Infinite hexagonal grid}\label{sec:inf_hex_grid} %%%%%%%%%%%%%%%%%%

When we assume the fire starts at vertex $v_0$, the hexagonal grid is naturally divided into
six $60^\circ$ cones, see Figure~\ref{fig:hex-cones}(a).

\begin{figure}[htbp]
\begin{center}
\begin{tabular}{ccc}
	\includegraphics{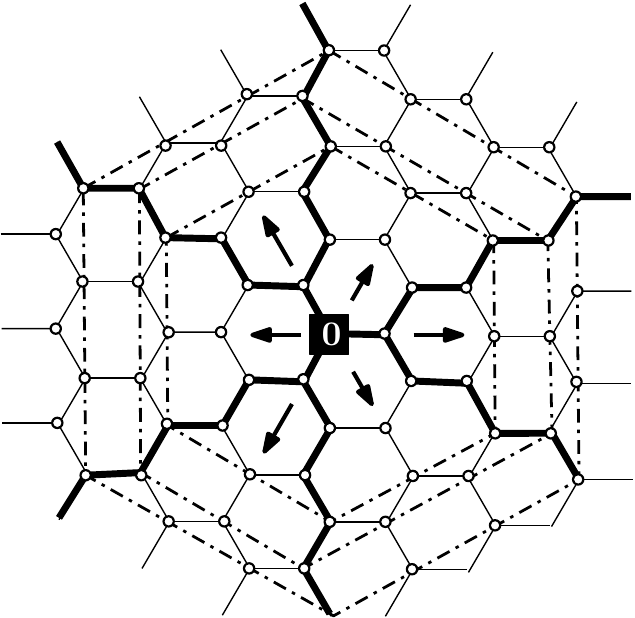} & \hspace{0cm} &
	\includegraphics{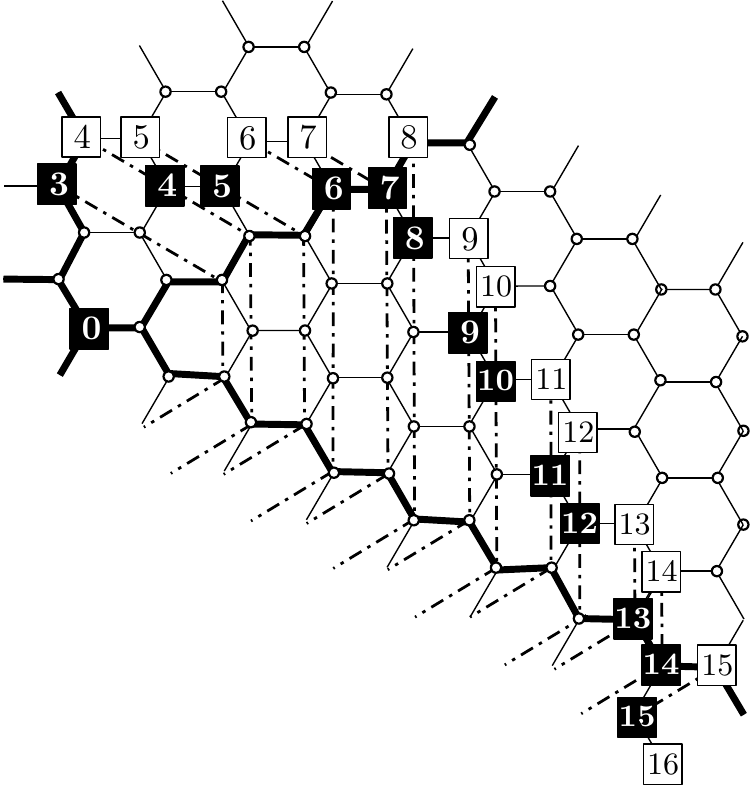} \\
      (a) && (b)
\end{tabular}
\end{center}
    \caption{(a) Six cones of the hex grid centered at $v_0$, the dash-dotted hexagons indicate $N_4(v_0)$, $N_5(v_0)$ and $N_6(v_0)$.
    (b) Construction of two segments of the spiral for $t_0=3$.}
    \label{fig:hex-cones}
\end{figure}

\smallskip
Let us start with the following simple but convenient observation that allows us to treat general situations.
At some point in the game, let $B$ be the set of burning vertices and $A\subseteq B$ be the {\em active} burning vertices,
that is vertices of $B$ with a free neighbor that can start burning in the next turn.
Let $v_0$ and $t$ be such that $A\subseteq N_{\leq t}(v_0)$. Then the firefighters may play as if exactly $N_t(v_0)$ were
burning and all the vertices of $B\setminus A$ were protected (that is the new setting) and any strategy playable in the new setting
will be playable in the original situation as well.

This follows from the fact that only the vertices of $A$ will ever cause new fire
and assuming more vertices to be burning only helps the fire. Interchanging burning non-active fires with protected vertices
makes no difference for the progress of the game (except for counting saved vertices), as non-active vertices will never spread fire.
Below, this allows us to assume that the already burnt area is some $N_{\leq t}(v_0)$.

\medskip
{\bf Spiral construction.} 
Assuming that $N_{\leq t_0}(v_0)$ burn, we show how one firefighter can build a spiral of protected vertices delaying the fire by a
constant factor. Without loss of generality, let $t_0+1$ be the number of the next turn to take place. This matches the situation
of fire starting at $v_0$ in turn 0.

The spiral is composed of successive segments, each segment a line contained in one of the six cones.
In turn $t$, the firefighter will protect a vertex in $N_t(v_0)$, so it can not be on fire at that time.
The construction of two successive segments is illustrated on Figure~\ref{fig:hex-cones}(b) for the case $t_0=3$.
Starting a segment in turn $t$ on one cone boundary means that the next segment (starting at another cone boundary)
will start in turn $2t$ or $2t+1$ depending on the cone type, however, $2t+O(1)$ is sufficient for our purposes.

\begin{figure}[hbt]
\begin{center}
    \begin{tabular}{p{6cm}p{0.1cm}p{6cm}}
    \strut
    \vskip 1cm
    \centerline{\includegraphics{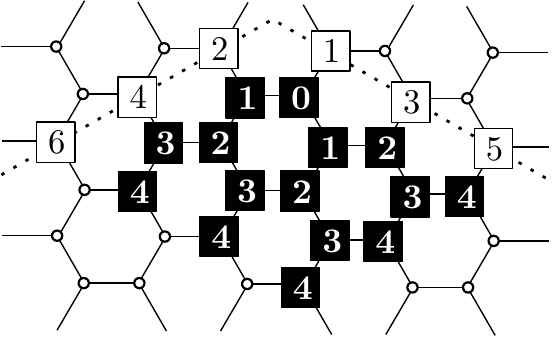}}
    \centerline{(a)}
    \vskip 1cm
    \centerline{\includegraphics{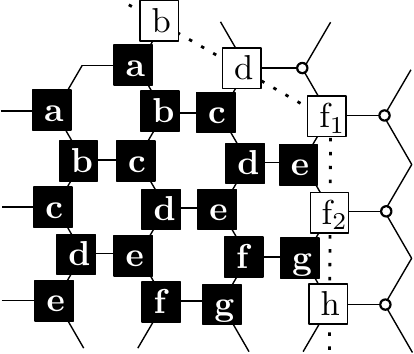}}
    \centerline{(b)}
    \strut
    &&
    \strut\vskip 1cm
    \centerline{\includegraphics{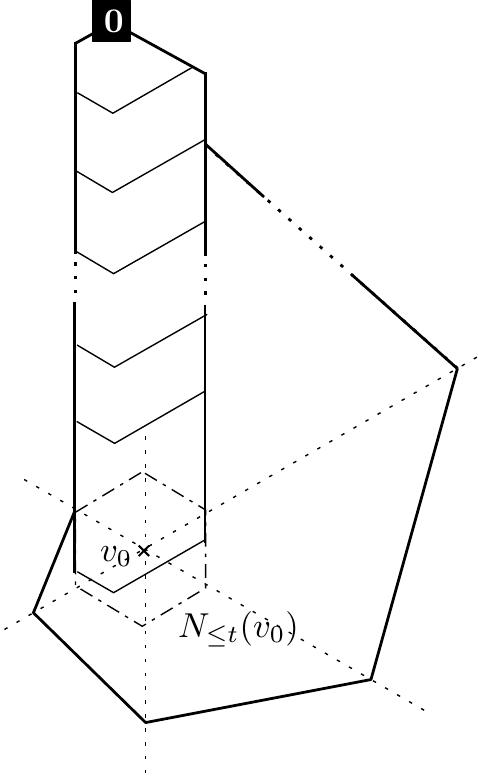}}
    \centerline{(c)}
    \strut
    \end{tabular}
\end{center}
\caption{(a) Start of the strategy protecting $2/3$ of the grid with indicated protected rays.
    (b) Bending the protected ray with one extra firefighter at time $f$.
    (c) Overview of the strategy of Theorem~\ref{thm:hex}. Starting with two bends, we build
    a long enough strip, then start spiraling. (Note that the spiral is deformed to fit in the figure.)
    $v_0$ and $N_{\leq t}(v_0)$ are as in the spiral construction description in Section~\ref{sec:inf_hex_grid},
    the thin lines indicate the active fire at certain time points.}
    \label{fig:hex}
\end{figure}

\medskip
\begin{proof}[Proof of Theorem~\ref{thm:hex}]
    The initial strategy is best illustrated and explained by Figure~\ref{fig:hex}(a).
    The firefighter alternates between protecting two rays, always playing to $N_t(v_0)$ in turn $t$,
    therefore making the strategy valid. Note the rays are chosen such that one intersects only
    $N_t(v_0)$ with $t$ odd and the other with $t$ even.

    At the point we get the first extra protected vertex, we can {\em bend} one of the rays by $60^\circ$ as indicated in
    Figure~\ref{fig:hex}(b). The letters indicate consecutive turn numbers, in the turns $a$, $c$, $e$ and $g$,
    the firefighter protects the other ray, $f$ is the turn at time $t_1$ with the first extra firefighter.
    Note that from this point on, 5/6 of the grid is protected --- in the limit, the fire would only occupy
    a $60^\circ$ wedge.

    When we get the second extra firefighter at time $t_2$, we bend the other ray in a symmetric way.
    Note that in case the extra firefighter came in a turn we play on the other ray, we can protect the desired vertex
    one turn in advance.
   
    After the two extra protections, the fire is restricted to a strip extending only in one direction as in Figure~\ref{fig:hex}(c).
    Note that the strip might be very wide, depending on $t_1$ and $t_2$, but the width does not change and
    there is some $t$ such that in every turn, all the active fire is contained in some ball of radius $t$.
    We let the strip grow to length at least $2^7 t$ to make the following step possible.

    The firefighter stops protecting the two rays and start building a spiral around $N_t(v_0)$ for suitable $v_0$ as
    described in Section~\ref{sec:inf_hex_grid}.
    If we start the spiral in the angle indicated in Figure~\ref{fig:hex}(c), the fifth spiral segment
    hits the wall of the strip and we have enclosed the fire with protected vertices. Since the first
    segment of the spiral starts in distance $t$ from $v_0$, the fifth segment ends in distance
    $2^6t+O(1)$ from $v_0$, so stripe length $2^7t$ is enough for the spiral and the strip boundary to meet.
\end{proof}

\begin{figure}[htbp]
    \begin{center}
    \begin{tabular}{m{9.5cm}m{0.1cm}m{5.5cm}}
        \centerline{\includegraphics{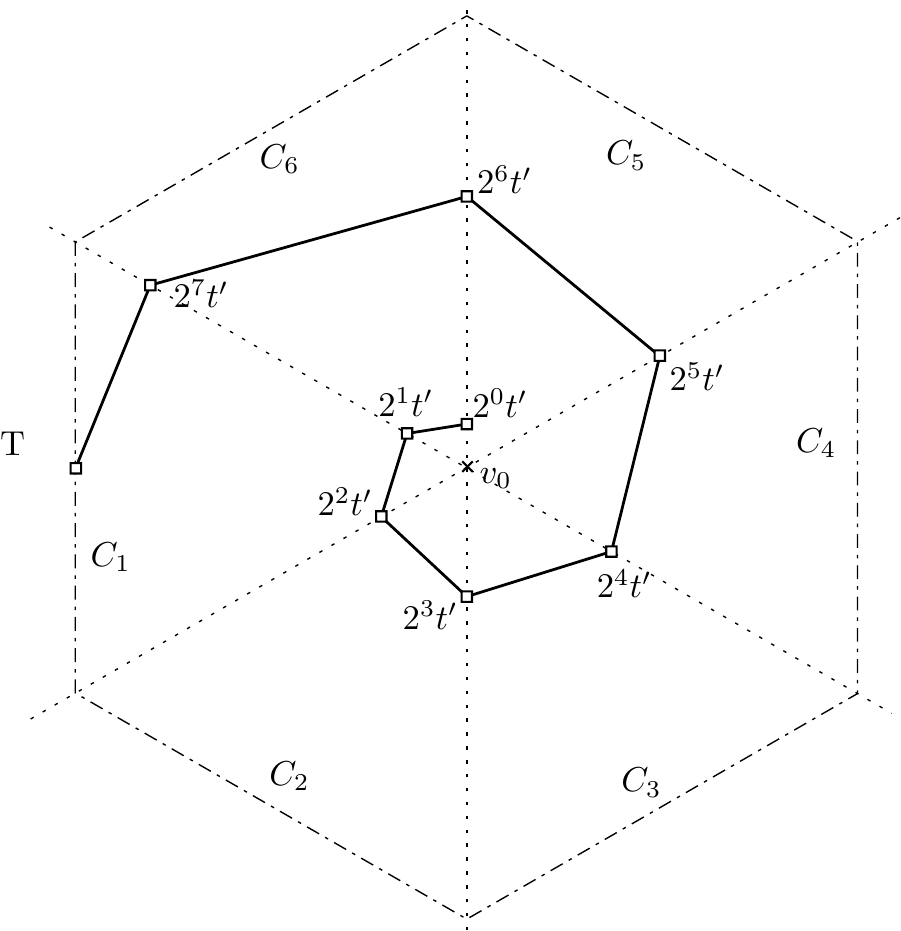}} &&
        \centerline{\includegraphics{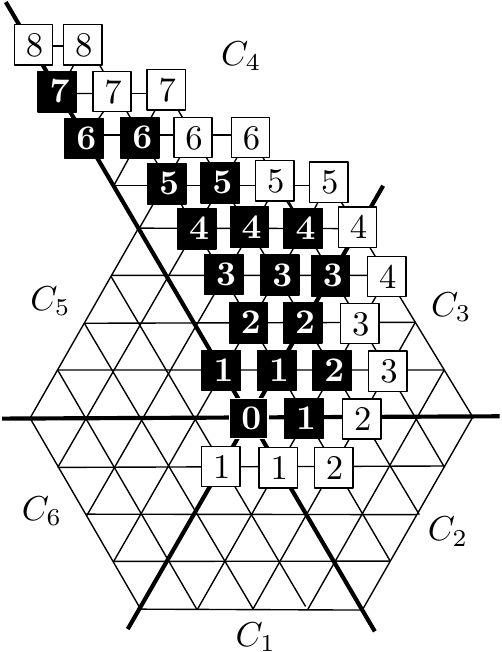}}
	\\
	\centerline{(a)} &&
	\centerline{(b)}
    \end{tabular}
    \end{center}
    \caption{(a) Seven consecutive segments of a spiral. The numbers indicate the distances of the segment ends
    from $v_0$ up to $O(1)$ additive factor. Note that the spiral is deformed to fit the figure.
    (b) Cones $C_1,\dots C_6$ in $\GTri$ and the construction of first four segments of a spiral in case $t=0$.}
    \label{fig:hex-spiral}
    \label{fig:triangle-spiral}
\end{figure}

\begin{proof}[Proof of Theorem~\ref{thm:hex_slowdown}]
    Choose $v_0$ and $t$ so that all burning vertices are contained in $N_{\le t}(v_0)$ and
    start building a spiral in distance $t+1$ from $v_0$ as described above. We may, without loss of generality,
    assume that the construction starts in turn $t+1$.

    Now consider turn $T$ and assume $T\geq 2^8t$.
    Let $t'$ be such that $t'=2^kt+O(k)$ for some $k$ be a length of a segment of the spiral
    and $2^7t'+O(k) \leq T < 2^8t'+O(k)$. The differences $O(k)$ come from the fact that
    the spiral segment lengths do not double exactly but up to $O(1)$.
    The situation with seven largest segments is illustrated on Figure~\ref{fig:hex-spiral}~(a).

    Notice that even when considering only the last seven consecutive segments of the spiral as protected,
    the fire has to take a detour proportional to the size of the smallest considered segment, which is $t'$,
    to reach any vertex of $N_{T}(v_0)$.

    More precisely, let $C_1, C_2, \dots,C_6$ denote the cones around $v_0$ as on Figure~\ref{fig:hex-spiral}~(a).
    Let $B_i=N_{T}(v_0)\cap C_i$.
    To reach $B_i$ from $v_0$, the fire has to travel at least the distance $v_0 \cdots 2^{i-1}t' \cdots B_i$.
    Any such path has length at least $2^{i-1}t'+O(1)+T$ which is minimal for $B_1$.
    The actual grid distance $v_0 \dots B_i$ is always $T$.

    Therefore, we can take any constant $c$ such that for $T$ large enough we have $c>\frac{T}{2^{0}t'+O(1)+T}$.
    Since $t'>2^{-8}T$, this is satisfied by any $c>\frac{1}{1+2^{-8}}\approx 0.9961$ for $T$ large enough compared to $t$.
\end{proof}

\subsection{Infinite triangular grid}\label{sec:inf_tringle_grid} %%%%%%%%%%%%%%%%%%

In this section we prove results similar to those on hexagonal grid. The triangular grid shares many characteristics with the hexagonal one,
Figure~\ref{fig:triangle-spiral}~(b) shows a definition of a cone, and the constructions are generally very similar in nature, allowing
us to reuse certain arguments from Section~\ref{sec:inf_hex_grid}, which we do for the sake of brevity.

\begin{proof}[Proof of Theorem~\ref{thm:triangle_slowdown}]
    The proof follows the structure of the proof of Theorem~\ref{thm:hex_slowdown}, please refer to it for some of the common details.

    We assume that all fire is contained within $N_{\leq t}(v_0)$ for some $t$ and $v_0$ and assume that the next turn has number $t+1$.
    We then start building a spiral as illustrated on Figure~\ref{fig:triangle-spiral}~(b) for $t=0$ (but the general case is
    analogous to the construction in $C_2$ and $C_3$). Note that we also have that if we start the construction on the boundary of $C_i$ in
    turn $t$, we reach the other boundary of $C_i$ in turn $2t$. Again, the correctness of the construction follows from the fact
    that in turn $k$, the firefighters play to vertices in $N_{k}(v_0)$, avoiding any possibly burning vertices.

    The rest of the proof is almost identical to that of Theorem~\ref{thm:hex_slowdown}, namely the choice of $t'$, the overall situation
    within the cones is identical to Figure~\ref{fig:hex-spiral}~(a), and the resulting constant can be chosen within the same interval,
    that is $c>\frac{1}{1+2^{-8}}\approx 0.9961$ for $T$ large enough. 
\end{proof}

%\smallskip
%The authors are not aware of any direct relationship between strategies for $\GHex$ and $\GTri$. It would seem tempting to transform strategy for $\GHex$ and one firefighter to a strategy for $\GTri$ and two firefighters, or vice versa, but the former is problematic due to generally shorter distances in $\GTri$ and the latter partly due to a possible advantage two firefighters may pose in the early game.

\section{Acknowledgement}

The authors would like to thank the anonymous referee for pointing out the relationship between strategies for $\GHex$ and for $\GTri$. While this relation is unpublished, it is quite similar to the relation between the square lattice and the square lattice with diagonals, which is described in~\cite{32fraction} (see Proposition~11). 

\smallskip

\noindent \textbf{Claim.} \emph{Every strategy given $2x_i$ firefighters on the $i$-th turn ($i = 1, 2, \ldots, t$) on $\GTri$ which stops the fire from spreading after $t$ turns leaving $b$ burnt vertices, can be translated into a strategy to stop the fire in $2t + 1$ turns on $\GHex$ using $x_{\lceil i/2 \rceil}$ firefighters on the $i$-th turn ($i=1,2,\ldots, 2t$) and no  firefighters on the last turn, leaving at most $2b+f$ burning vertices ($f$ is the number of firefighters used in total, which is the same for both strategies).}

\begin{proof}
Let 
$$
A = \left\{ a i +b \left( \frac {\sqrt{3}}{2} + \frac i2 \right) + c \left( \frac {\sqrt{3}}{2} -\frac i2 \right) : a, b, c \in \Z \right)
$$
be the collection of the vertices of the triangular lattice $\GTri$, represented by complex numbers. Here, two vertices share an edge if and only if they are at distance 1. Let
$$
B = A \cup \left\{a + \frac {1}{\sqrt{3}} : a \in A \right\},
$$
and observe the graph whose vertices are the elements of $B$ and two vertices are connected by an edge if and only if they are at distance $1/\sqrt{3}$ is exactly the hexagonal lattice $\GHex$. (See Figure~\ref{fig:fig-hex-triangles}.)

\begin{figure}[htbp]
\begin{center}
      \includegraphics[height=2.1in]{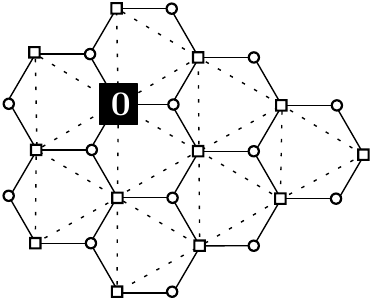}
\end{center}
    \caption{Relationship between strategies for $\GHex$ and for $\GTri$.} \label{fig:fig-hex-triangles}
\end{figure}

Without loss of generality, assume that the fire starts at 0, and we are given a winning strategy on $\GTri$ (that is, a strategy on $A$) which uses $2x_i$ firefighters on the $i$-th turn and contains the fire after $t$ turns. We will mimic this strategy to get another one for $\GHex$ (that is, a strategy on $B \supseteq A$) that protects vertices of $A$ only. We observe that for any such strategy the following properties hold:
\begin{itemize}
\item in every odd turn, the fire spreads to some vertices of $B \setminus A$ that are unprotected and at distance $1/\sqrt{3}$ from some vertices of $A$ that are burning; no vertex of $A$ catches fire,
\item in every even turn, the fire spreads to some vertices of $A$ that are unprotected and at distance 1 from some vertices of $A$ that are burning (that is, as if we were playing on $A$); no vertex of $B$ catches fire.
\end{itemize}
We play in the following way on $\GHex$. On the turns $2i-1$ and $2i$ we place $x_i$ firefighters on the vertices indicated by the strategy for $\GTri$; the order in which we place them (that is, which vertices should be protected on each of the two turns) is not important. Observing that the fire spreads to vertices of $A$ only every second turn, and that while playing using this strategy it spreads exactly as though it would have on $A$, we get that after $t$ rounds consisting of two turns, the fire will never spread again to a vertex in $A$ and thus after one additional turn it will stop spreading completely. 

We know that $b$ vertices of $A$ are burnt. It remains to calculate how many vertices are burnt in total. Here we use the following observation. We know that every burning vertex $a \in B \setminus A$ has the property that the three vertices of $A$ at distance $1/\sqrt{3}$ from $a$ are either burning or protected. One of them, say, $a - 1/\sqrt{3}$ is unique for $a$ so the number of vertices of $B \setminus A$ that are burnt is at most $b$. The proof is finished.
\end{proof}

%\newpage

\appendix %%%%%%%%%%%%%%%%%%%%%%%%%%%%%%%%%%%%%%%%%%%%%%%%%%%%%%%%%%%%%%%%%%%%%%%%%%%%%%%%%%%%%%%%%%%%%%%%%%%%
%\break
\section{Proof of the upper bound from Theorem~\ref{thm:finite}}\label{sec:appenix_upper}

Here we examine the upper bound in the general case when a fire breaks out at a vertex $(a,b)$ of $P_n \square P_n$. As before, due to the symmetry, we may assume that $a=xn+o(1)$, $b=yn+o(1)$ for some $0 \le x \le 1/2$ and $0 \le y \le x$. We want to investigate the size of $N_r(a,b)$ so we are interested in two things: the time $(t_i + o(1)) n$ when we reach each of the 4 borders ($i = N, S, E, W$), and the time $(t_i+o(1)) n$ when each of the 4 fire fronts ($i = NE, SE, SW, NW$) disappear. Clearly,
$$
t_E = \frac 12 - x, \ \ t_N = \frac 12 - y, \ \ t_S = \frac 12 + y, \ \ t_W = \frac 12 + x,
$$
and $t_E \le t_N \le t_S \le t_W$. Moreover,
$$
t_{NE} = 1-x-y, \ \ t_{SE} = 1-x+y, \ \ t_{NW} = 1+x-y, \ \ t_{SW} = 1+x+y,
$$
and $t_{NE} \le t_{SE} \le t_{NW} \le t_{SW}$. The order in which these events occur determines the formula for $|N_r(a,b)|$. It is easy to see that $t_{NE} \ge t_N$, $t_{SE} \ge t_S$, and $t_{NW} \ge t_W$, and so there are only 5 cases to consider. Unfortunately, since the number of vertices burnt is at least $\sum_{r \ge 1} \max(|N_r(0,0)| - r,0)$ (by Lemma~\ref{lem:dist_r}), sometimes we need to consider some sub-cases depending on in which time interval $|N_r(0,0)| - r$ becomes negative. Let $(t+o(1))n$ be the first time this happens. The calculations are elementary but quite tedious, so we refer the reader to the Maple worksheet~\cite{maple} to check integrals, etc.

\begin{figure}[htbp]
    \centerline{\includegraphics[scale=0.9]{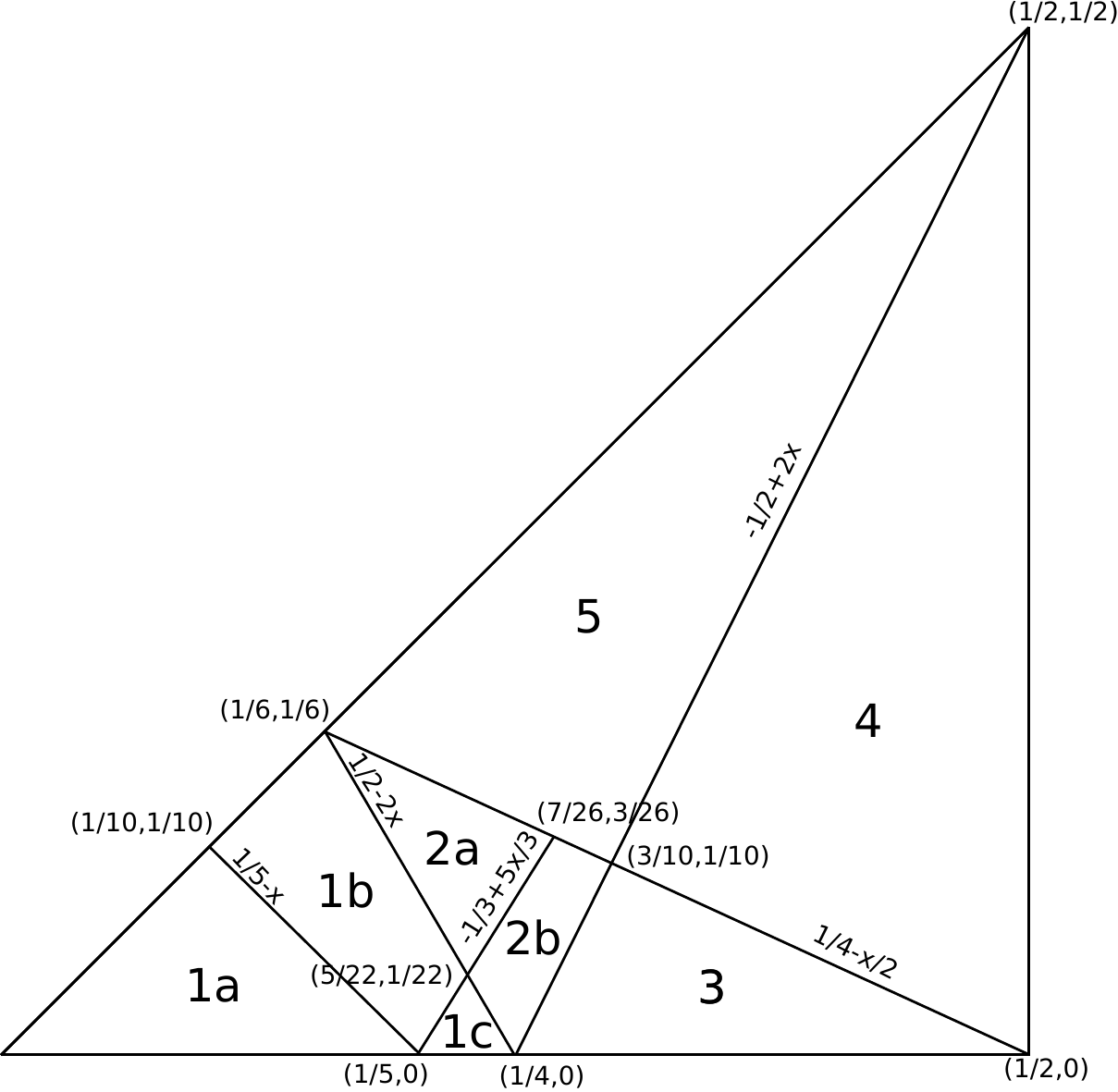}}
    \caption{The regions corresponding to the individual cases.}
    \label{fig:regions}
\end{figure}

\bigskip

\textbf{Case 1: $t_E \le t_N \le t_S \le t_W \le t_{NE} \le t_{SE} \le t_{NW} \le t_{SW}$.}\\
The condition $t_W \le t_{NE}$ is equivalent to $y \le 1/2 - 2x$ so we are concerned with the Region~1 presented on Figure~\ref{fig:regions}. The number of vertices at distance $rn$ from $(a,b)$ (for some $r=r(n)$) behaves as follows:
{ \tiny
$$
\frac {|N_{rn}(a,b)|}{n} = o(1) +
\begin{cases}
4r & \text{ if } 0 \le r  \le t_E \\
1-2x+2r & \text{ if } t_E \le r \le t_N\\
2-2x-2y & \text{ if } t_N \le r \le t_S\\
3-2x-2r & \text{ if } t_S \le r \le t_W\\
4-4r & \text{ if } t_W \le r \le t_{NE}\\
3+x+y-3r & \text{ if } t_{NE} \le r \le t_{SE}\\
2+2x-2r & \text{ if } t_{SE} \le r \le t_{NW}\\
1+x+y-r & \text{ if } t_{NW} \le r \le t_{SW}.
\end{cases}
$$
}

\textbf{Case 1a: $t_W \le t \le t_{NE}$.}\\
The condition $t \le t_{NE}$ implies that $y \le 1/5-x$ (as before, we direct the reader to Figure~\ref{fig:regions}). Provided $(a,b)$ is from the region we consider in this sub-case, $t = 4/5$ (note that $|N_{rn}(a,b)| - rn = (4-5r + o(1))n$ for $t_W \le r \le t_{NE}$). In other words, at time $(4/5+o(1))n$, Lemma~\ref{lem:dist_r} stops working in the sense that it does not give us any non-trivial bound for the number of vertices burning. We stop investigating the process at that time and assume the worst case scenario that all remaining vertices will become protected. Hence, the proportion of vertices burnt is at least
\begin{eqnarray*}
B_{1a} (x,y) &:=& \int_0^{t_E} 3r dr + \int_{t_E}^{t_N} (1-2x+r)dr + \int_{t_N}^{t_S} (2-2x-2y-r) dr  \\
& & + \int_{t_S}^{t_W} (3-2x-3r) dr + \int_{t_W}^t (4-5r)dr = \frac 35 - 2x^2 - 2y^2.
\end{eqnarray*}
(Indeed, in the first time interval, there are $4rn$ vertices at distance $rn$ from $(a,b)$ and at most $rn$ of them are protected by Lemma~\ref{lem:dist_r}. In the second time interval, there are $(1-2x+2r+o(1))n$ vertices at distance $rn$ from $(a,b)$ but the lemma still gives us that at most $rn$ of them are protected. Hence the term $1-2x+r$. Other terms are derived in a similar way.) The contribution from vertices from this region is calculated as follows (as usual, we refer to Figure~\ref{fig:regions}).
$$
C_{1a} := \int_0^{1/10} \int_0^x B_{1a} (x,y) dy dx + \int_{1/10}^{1/5} \int_0^{1/5-x} B_{1a} (x,y) dy dx = \frac {43}{7500} \approx 0.005733.
$$
(The region has a triangular shape so we have to split it into two integrals.)

\textbf{Case 1b: $t_{NE} \le t \le t_{SE}$.}\\
The condition $t \le t_{SE}$ implies that $y \ge -1/3+5x/3$. It follows that $t=3/4+x/4+y/4$ and the proportion of vertices burnt is at least
\begin{eqnarray*}
B_{1b} (x,y) &:=& \int_0^{t_E} 3r dr + \int_{t_E}^{t_N} (1-2x+r)dr + \int_{t_N}^{t_S} (2-2x-2y-r) dr  \\
& & + \int_{t_S}^{t_W} (3-2x-3r) dr + \int_{t_W}^{t_{NE}} (4-5r)dr + \int_{t_{NE}}^t (3+x+y-4r) dr \\
&=& \frac 58 - \frac{x}{4} - \frac {11x^2}{8} - \frac {y}{4} - \frac {11y^2}{8} + \frac {5xy}{4}.
\end{eqnarray*}
The contribution from vertices from this region is
\begin{eqnarray*}
C_{1b} &:=& \int_{1/10}^{1/6} \int_{1/5-x}^x B_{1b} (x,y) dy dx + \int_{1/6}^{1/5} \int_{1/5-x}^{1/2-2x} B_{1b} (x,y) dy dx \\
&& + \int_{1/5}^{5/22} \int_{-1/3+5x/3}^{1/2-2x} B_{1b} (x,y) dy dx = \frac {459563}{89842500} \approx 0.005115.
\end{eqnarray*}

\textbf{Case 1c: $t_{SE} \le t \le t_{NW}$.}\\
The condition $t \le t_{NW}$ is equivalent to $y \le 1/3+x/3$ which is satisfied by all points in the Case 1. It follows that $t=2/3 + 2x/3$ and the proportion of vertices burnt is at least
\begin{eqnarray*}
B_{1c} (x,y) &:=& \int_0^{t_E} 3r dr + \int_{t_E}^{t_N} (1-2x+r)dr + \int_{t_N}^{t_S} (2-2x-2y-r) dr  \\
& & + \int_{t_S}^{t_W} (3-2x-3r) dr + \int_{t_W}^{t_{NE}} (4-5r)dr + \int_{t_{NE}}^{t_{SE}} (3+x+y-4r) dr \\
& & + \int_{t_{SE}}^t (2+2x-3r) dr = \frac 23 - \frac {2x}{3} - \frac {x^2}{3} - y^2.
\end{eqnarray*}
The contribution from vertices from this region is
\begin{eqnarray*}
C_{1c} &:=& \int_{1/5}^{5/22} \int_0^{-1/3+5x/3} B_{1c} (x,y) dy dx + \int_{5/22}^{1/4} \int_0^{1/2-2x} B_{1c} (x,y) dy dx \\
&=& \frac {434549}{766656000} \approx 0.000567.
\end{eqnarray*}

\textbf{Case 2: $t_E \le t_N \le t_S \le t_{NE} \le t_W \le t_{SE} \le t_{NW} \le t_{SW}$.}\\
The condition $t_S \le t_{NE}$ is equivalent to $y \le 1/4 - x/2$ and the condition $t_W \le t_{SE}$ to $y \ge -1/2 + 2x$, so we are concerned with the Region~2 presented on Figure~\ref{fig:regions}. This time
{ \tiny
$$
\frac {|N_{rn}(a,b)|}{n} = o(1) +
\begin{cases}
4r & \text{ if } 0 \le r  \le t_E \\
1-2x+2r & \text{ if } t_E \le r \le t_N\\
2-2x-2y & \text{ if } t_N \le r \le t_S\\
3-2x-2r & \text{ if } t_S \le r \le t_{NE}\\
2-x+y-r & \text{ if } t_{NE} \le r \le t_{W}\\
3+x+y-3r & \text{ if } t_{W} \le r \le t_{SE}\\
2+2x-2r & \text{ if } t_{SE} \le r \le t_{NW}\\
1+x+y-r & \text{ if } t_{NW} \le r \le t_{SW}.
\end{cases}
$$
}

\textbf{Case 2a: $t_{W} \le t \le t_{SE}$.}\\
The condition $t \le t_{SE}$ implies that $y \ge -1/3+5x/3$. It follows that $t=3/4+x/4+y/4$ and the proportion of vertices burnt is at least
\begin{eqnarray*}
B_{2a} (x,y) &:=& \int_0^{t_E} 3r dr + \int_{t_E}^{t_N} (1-2x+r)dr + \int_{t_N}^{t_S} (2-2x-2y-r) dr  \\
& & + \int_{t_S}^{t_{NE}} (3-2x-3r) dr + \int_{t_{NE}}^{t_W} (2-x+y-2r) dr \\
&& + \int_{t_W}^{t} (3+x+y-4r) dr \\
&=& \frac 58 - \frac{x}{4} - \frac {11x^2}{8} - \frac {y}{4} - \frac {11y^2}{8} + \frac {5xy}{4} = B_{1b} (x,y).
\end{eqnarray*}
The contribution from vertices from this region is
\begin{eqnarray*}
C_{2a} &:=& \int_{1/6}^{5/22} \int_{1/2-2x}^{1/4-x/2} B_{2a} (x,y) dy dx + \int_{5/22}^{7/26} \int_{-1/3+5x/3}^{1/4-x/2} B_{2a} (x,y) dy dx \\
&=& \frac {358687}{157907178} \approx 0.002275.
\end{eqnarray*}

\textbf{Case 2b: $t_{SE} \le t \le t_{NW}$.}\\
The condition $t \le t_{NW}$ is equivalent to $y \le 1/3+x/3$ which is satisfied by all points in the Case 2. It follows that $t=2/3 + 2x/3$ and the proportion of vertices burnt is at least
\begin{eqnarray*}
B_{2b} (x,y) &:=& \int_0^{t_E} 3r dr + \int_{t_E}^{t_N} (1-2x+r)dr + \int_{t_N}^{t_S} (2-2x-2y-r) dr  \\
& & + \int_{t_S}^{t_{NE}} (3-2x-3r) dr + \int_{t_{NE}}^{t_W} (2-x+y-2r) dr \\
&& + \int_{t_W}^{t_{SE}} (3+x+y-4r) dr + \int_{t_{SE}}^t (2+2x-3t) dr \\
&=& \frac 23 - \frac {2x}{3} - \frac {x^2}{3} - y^2 = B_{1c} (x,y).
\end{eqnarray*}
The contribution from vertices from this region is
\begin{eqnarray*}
C_{2b} &:=& \int_{5/22}^{1/4} \int_{1/2-2x}^{-1/3+5x/3} B_{2b} (x,y) dy dx + \int_{1/4}^{7/26} \int_{-1/2+2x}^{-1/3+5x/3} B_{2b} (x,y) dy dx \\
&& + \int_{7/26}^{3/10} \int_{-1/2+2x}^{1/4-x/2} B_{2b} (x,y) dy dx  = \frac {478988221}{280723872000} \approx 0.001706.
\end{eqnarray*}

\textbf{Case 3: $t_E \le t_N \le t_S \le t_{NE} \le t_{SE} \le t_W \le t_{NW} \le t_{SW}$.}\\
In this case, we are concerned with the Region~3 presented on Figure~\ref{fig:regions}. This time
{ \tiny
$$
\frac {|N_{rn}(a,b)|}{n} = o(1) +
\begin{cases}
4r & \text{ if } 0 \le r  \le t_E \\
1-2x+2r & \text{ if } t_E \le r \le t_N\\
2-2x-2y & \text{ if } t_N \le r \le t_S\\
3-2x-2r & \text{ if } t_S \le r \le t_{NE}\\
2-x+y-r & \text{ if } t_{NE} \le r \le t_{SE}\\
1 & \text{ if } t_{SE} \le r \le t_{W}\\
2+2x-2r & \text{ if } t_{W} \le r \le t_{NW}\\
1+x+y-r & \text{ if } t_{NW} \le r \le t_{SW}.
\end{cases}
$$
}
In this case, $t_W \le t \le t_{NW}$. It follows that $t=2/3 + 2x/3$ and the proportion of vertices burnt is at least
\begin{eqnarray*}
B_{3} (x,y) &:=& \int_0^{t_E} 3r dr + \int_{t_E}^{t_N} (1-2x+r)dr + \int_{t_N}^{t_S} (2-2x-2y-r) dr  \\
& & + \int_{t_S}^{t_{NE}} (3-2x-3r) dr + \int_{t_{NE}}^{t_{SE}} (2-x+y-2r) dr \\
&& + \int_{t_{SE}}^{t_{W}} (1-r) dr + \int_{t_{W}}^t (2+2x-3t) dr \\
&=& \frac 23 - \frac {2x}{3} - \frac {x^2}{3} - y^2 = B_{2b}(x,y) = B_{1c} (x,y).
\end{eqnarray*}
The contribution from vertices from this region is
$$
C_{3} := \int_{1/4}^{3/10} \int_0^{-1/2+2x} B_{3} (x,y) dy dx + \int_{3/10}^{1/2} \int_0^{1/4-x/2} B_{3} (x,y) dy dx  = \frac {2807}{576000} \approx 0.004873.
$$

\textbf{Case 4: $t_E \le t_N \le t_{NE} \le t_S \le t_{SE} \le t_W \le t_{NW} \le t_{SW}$.}\\
In this case, we are concerned with the Region~4 presented on Figure~\ref{fig:regions}. This time
{ \tiny
$$
\frac {|N_{rn}(a,b)|}{n} = o(1) +
\begin{cases}
4r & \text{ if } 0 \le r  \le t_E \\
1-2x+2r & \text{ if } t_E \le r \le t_N\\
2-2x-2y & \text{ if } t_N \le r \le t_{NE}\\
1-x-y+r & \text{ if } t_{NE} \le r \le t_{S}\\
2-x+y-r & \text{ if } t_{S} \le r \le t_{SE}\\
1 & \text{ if } t_{SE} \le r \le t_{W}\\
2+2x-2r & \text{ if } t_{W} \le r \le t_{NW}\\
1+x+y-r & \text{ if } t_{NW} \le r \le t_{SW}.
\end{cases}
$$
}
In this case, $t_W \le t \le t_{NW}$. It follows that $t=2/3 + 2x/3$ and the proportion of vertices burnt is at least
\begin{eqnarray*}
B_{4} (x,y) &:=& \int_0^{t_E} 3r dr + \int_{t_E}^{t_N} (1-2x+r)dr + \int_{t_N}^{t_{NE}} (2-2x-2y-r) dr  \\
& & + \int_{t_{NE}}^{t_{S}} (1-x-y) dr + \int_{t_{S}}^{t_{SE}} (2-x+y-2r) dr \\
&& + \int_{t_{SE}}^{t_{W}} (1-r) dr + \int_{t_{W}}^t (2+2x-3t) dr \\
&=& \frac 23 - \frac {2x}{3} - \frac {x^2}{3} - y^2 = B_3(x,y) = B_{2b}(x,y) = B_{1c} (x,y).
\end{eqnarray*}
The contribution from vertices from this region is
$$
C_{4} := \int_{3/10}^{1/2} \int_{1/4-x/2}^{-1/2+2x} B_{4} (x,y) dy dx = \frac {473}{36000} \approx 0.013139.
$$

\textbf{Case 5: $t_E \le t_N \le t_{NE} \le t_{S} \le t_W \le t_{SE} \le t_{NW} \le t_{SW}$.}\\
In this case, we are concerned with the Region~5 presented on Figure~\ref{fig:regions}. This time
{ \tiny
$$
\frac {|N_{rn}(a,b)|}{n} = o(1) +
\begin{cases}
4r & \text{ if } 0 \le r  \le t_E \\
1-2x+2r & \text{ if } t_E \le r \le t_N\\
2-2x-2y & \text{ if } t_N \le r \le t_{NE}\\
1-x-y+r & \text{ if } t_{NE} \le r \le t_{S}\\
2-x+y-r & \text{ if } t_{S} \le r \le t_{W}\\
3+x+y-3r & \text{ if } t_{W} \le r \le t_{SE}\\
2+2x-2r & \text{ if } t_{SE} \le r \le t_{NW}\\
1+x+y-r & \text{ if } t_{NW} \le r \le t_{SW}.
\end{cases}
$$
}
In this case, $t_S \le t \le t_{W}$. It follows that $t=1-x/2+y/2$ and the proportion of vertices burnt is at least
\begin{eqnarray*}
B_{5} (x,y) &:=& \int_0^{t_E} 3r dr + \int_{t_E}^{t_N} (1-2x+r)dr + \int_{t_N}^{t_{NE}} (2-2x-2y-r) dr  \\
& & + \int_{t_{NE}}^{t_{S}} (1-x-y) dr + \int_{t_{S}}^{t} (2-x+y-2r) dr \\
&=& \frac 34 - x - \frac {x^2}{4} - \frac{5y^2}{4} + \frac {xy}{2}.
\end{eqnarray*}
The contribution from vertices from this region is
$$
C_{5} := \int_{1/6}^{3/10} \int_{1/4-x/2}^{x} B_{5} (x,y) dy dx + \int_{3/10}^{1/2} \int_{-1/2+2x}^{x} B_{5} (x,y) dy dx= \frac {1907}{162000} \approx 0.011772.
$$

Finally, since there are 8 symmetric regions to consider,
\begin{eqnarray*}
\rho(P_n \square P_n) &\le& 1-8(C_{1a}+C_{1b}+C_{1c}+C_{2a}+C_{2b}+C_3+C_4+C_5) + o(1) \\
&=& \frac {67243}{105300} + o(1) < 0.6386.
\end{eqnarray*}
The proof of the upper bound is finished.


\begin{thebibliography}{99} %%%%%%%%%%%%%%%%%%%%%%%%%%%%%%%%%%%%%%%%%%%%%%%%%%%

%\bibitem{AS} N.~Alon and J.H.~Spencer, The Probabilistic Method, Wiley, 1992 (Third Edition, 2008).

%\bibitem{bollobas2} B.~Bollob\'{a}s, A probabilistic proof of an asymptotic formula for the number of labelled regular graphs, \emph{European Journal of Combinatorics} {\bf 1} (1980), 311--316.

\bibitem{CCVZ} L.~Cai, Y.~Cheng, E.~Verbin, and Y.~Zhou, Surviving Rates of Graphs with Bounded Treewidth for the Firefighter Problem, \emph{SIAM Journal on Discrete Mathematics} \textbf{24(4)} (2010), 1322--1335.

\bibitem{CW} L.\ Cai and W.\ Wang, The surviving rate of a graph, \emph{SIAM Journal of Discrete Mathematics} \textbf{23(4)} (2009), 1814--1826.

%\bibitem{planar} L.~Esperet, J.~van den Heuvel, F.~Maffray, and F.~Sipma, Fire containment in planar graphs, preprint.

\bibitem{32fraction} O.N.~Feldheim and R.~Hod, 3/2 firefighters are not enough, \emph{Discrete Applied Mathematics} \textbf{161} (2013), 301--306.

\bibitem{FHLS} S.~Finbow, B.~Hartnell, Q.~Li, and K.~Schmeisser, On minimizing the effects of fire or a virus on a network, Papers in honour of Ernest J.\ Cockayne, \emph{Journal of Combinatorial Mathematics and Combinatorial Computing} \textbf{33} (2000), 311--322.

\bibitem{FKGR} S.\ Finbow, A.\ King, G.\ MacGillivray, and R.\ Rizzi, The firefighter problem for graphs of maximum degree three, \emph{Discrete Mathematics} \textbf{307} (2007), 2094--2105.

\bibitem{FM} S.\ Finbow and G.\ MacGillivray, The firefighter problem: a survey of results, directions and questions. \emph{Australasian Journal of  Combinatorics} \textbf{43} (2009), 57--77.

\bibitem{FWW} S.\ Finbow, P.\ Wang, and W.\ Wang, The surviving rate of an infected network, \emph{Theoretical Computer Science} \textbf{411} (2010) 3651--3660.

\bibitem{F} P.\ Fogarty, Catching the Fire on Grids, M.Sc.\ Thesis, University of Vermont (2003).

\bibitem{Hartnel} B.\ Hartnell, Firefighter! An application of domination. Presentation at the 25th Manitoba Conference on Combinatorial Mathematics and Computing, University of Manitoba, Winnipeg, Canada, 1995.

\bibitem{HL} B.~Hartnell and Q.~Li, Firefighting on trees: how bad is the greedy algorithm?, \emph{Congressus Numerantium} \textbf{145} (2000), 187--192.

%\bibitem{JLR}  S. Janson, T. \L uczak and A. Ruci\'nski, {\em Random graphs}, Wiley, New York, 2000.

\bibitem{P} P.~Pra\l{}at, Graphs with average degree smaller than $\frac {30}{11}$ burn slowly, \emph{Graphs and Combinatorics} \textbf{30(2)} (2014), 455--470.

\bibitem{P2} P.~Pra\l{}at, Sparse graphs are not flammable, \emph{SIAM Journal on Discrete Mathematics} \textbf{27(4)} (2013), 2157--2166.

%\bibitem{NW-survey} N.C.~Wormald, Models of random regular graphs. Surveys in Combinatorics, 1999, J.D. Lamb and D.A. Preece, eds. London Mathematical Society Lecture Note Series, vol 276, pp. 239--298. Cambridge University Press, Cambridge, 1999.

\bibitem{WM} P.\ Wang and S.A.\ Moeller, Fire Control on Graphs, \emph{Journal of Combinatorial Mathematics and Combinatorial Computing}\ \textbf{41} (2000), 19--34.

\bibitem{maple} Maple worksheet available on-line: \texttt{http://www.math.ryerson.ca/\~{}pralat/}

\end{thebibliography}
\end{document}